\numberwithin{equation}{section}
 \newcommand{\rr}{{\mathbb R}}
 \newcommand{\R}{{\mathbb R}}
  \newcommand {\F}{{\mathbb F}}
 \newcommand\rn{{{\rr}^n}}
 \newcommand\fz{\infty}
  \newcommand\wq{\infty}
 \newcommand\az{\alpha}
 \newcommand\loc{{\mathop\mathrm{\,loc\,}}}
 \newcommand\lip{{\mathop\mathrm{\,Lip}}}
 \newcommand\gz{{\gamma}}
 \newcommand\ga{{\gamma}}
 \newcommand\boz{{\Omega}}
  \newcommand\Om{{\Omega}}
 \newcommand\wz{\widetilde}
 \newcommand\esup{\mathop{\mathrm{ess\,sup\,}}}
 \newcommand*{\bR}{\ensuremath{\mathbb{R}}}
 \newcommand*{\ssub}{\subset\subset}
\DeclareMathOperator{\Lip}{Lip}
\numberwithin{equation}{section}
\newtheorem{theorem}{Theorem}[section]
\newtheorem{lemma}[theorem]{Lemma}
\newtheorem{proposition}[theorem]{Proposition}
\theoremstyle{definition}
\newtheorem{definition}[theorem]{Definition}
\begin{document}

\title[$L^\infty$-variational problems associated to Finsler structures]{$L^\infty$-variational problems associated to measurable Finsler structures}

\author{Chang-Yu Guo,  Chang-lin Xiang and Dachun Yang}

\address[Chang-Yu Guo]{Department of Mathematics and Statistics, University of Jyv\"askyl\"a, P.O. Box 35, FI-40014, Jyv\"askyl\"a, Finland}
\email{changyu.c.guo@jyu.fi}

\address[Chang-Lin Xiang]{Department of Mathematics and Statistics, University of Jyv\"askyl\"a, P.O. Box 35, FI-40014, Jyv\"askyl\"a, Finland}
\email{changlin.c.xiang@jyu.fi}

\address[Dachun Yang]{School of Mathematical Sciences, Beijing Normal University, Laboratory of Mathematics and Complex Systems, Ministry of Education, Beijing 100875, People's Republic of China}
\email{dcyang@bnu.edu.cn}

\thanks{C.Y. Guo was partially supported by the Academy of Finland grant 131477 and the Magnus Ehrnrooth Foundation.  C.-L.  Xiang was supported by the Academy of Finland grant 259224.  D. Yang was supported by the National Natural Science Foundation of China (Grant Nos. 11171027 and 11361020), the Specialized Research Fund for the Doctoral Program of Higher Education of China (Grant No.  20120003110003) and the Fundamental Research Funds for Central Universities of China (Grant Nos. 2013YB60 and 2014KJJCA10).}

\begin{abstract}
We study $L^{\infty}$-variational problems associated to measurable Finsler structures  in Euclidean spaces. We obtain existence and uniqueness results for the absolute minimizers.
\end{abstract}

\maketitle

{\small
\keywords {\noindent {\bf Keywords:} $L^\wq$-variational problems; Existence; Uniqueness; Finsler structure}
\smallskip
\newline
\subjclass{\noindent {\bf 2010 Mathematics Subject Classification:  35J62, 31C45, 49J10}   }
\tableofcontents}
\bigskip

\arraycolsep=1pt

\section{Introduction}
In this paper, we study the $L^\wq$-variational problem %\marginpar{New notation $\F$}
\begin{equation}\label{eq: infinite variational prob.}
 \F(u;U):=\underset{x\in U}{\esup} F(x,\nabla u(x))
\end{equation}
over the class of Lipschitz functions on $U\ssub\boz$ with a given boundary
data, where $U\subset\subset\Om$ is an arbitrary open subset of a given domain $\Om$ in the Euclidean space $\R^n$, $n\ge 2$, and $F:\Om\times\R^n\to \R$ is a Borel measurable Finsler structure on $\Om$ (see Definition \ref{def:finsler structure} below). Above, $\nabla u(x)$ denotes the gradient of $u$ at $x$. By Rademacher's theorem, any locally Lipschitz continuous function is differentiable at almost every point, and hence~\eqref{eq: infinite variational prob.} makes sense. For applications of this $L^\infty$ calculus of variation,  see~\cite{hil05,gnp01} and the references therein.

The study of $L^\wq$-variational problems of type (\ref{eq: infinite variational prob.}) 
was initiated by Aronsson \cite{a1,a2,a3,a4} in the model case
$F(x,\,\xi):=|\xi|^2$.  That is,  consider  the functional~\eqref{eq: infinite variational prob.} in the form
\begin{equation}\label{eq:infinite harmonic}
\F(u,U):= \underset{x\in U}{\esup}|\nabla u(x)|^2.
\end{equation} 
Since then  the study of the $L^\fz$-variational problem,
for more general functionals $F$ with various smoothness assumptions,
has advanced significantly; see the seminal works \cite{ceg,j93} and the survey paper \cite{acj} for more information on the recent developments.
The $L^\fz$-variational problem is also interesting even if the functional $F$
is not smooth or even not continuous;
%for example, the Hamiltonian given by a noncontinuous diffusion matrix
%$A\in\mathscr A(\boz)$, such an $L^\fz$-variational problem is also
%interesting.
see for example \cite{acj,cp,cpp,gpp,ksz-2014} and the references
therein. In the following, we  first briefly review some results on  the model case ~\eqref{eq:infinite   harmonic}. This model case is of great importance,  due to its simple structure, and due to  all the techniques that are developed to study the existence and the uniqueness of~\eqref{eq:infinite harmonic} can be possibly applied to the general functional of the form~\eqref{eq: infinite variational prob.}. Then we present the main result of this paper.

%There are many applications for this $L^\infty$ calculus of variation, for example,in studying density-dependent Bingham fluids (such as landsliding)~\cite{hil05} and dielectric breakdown~\cite{gnp01}. These applications typically involve materials with a certain threshhold which determines how the material behaves. Let us for  instance consider the following problem: If the electric field $\nabla u$ in  a given body $\Om$ satisfies $|\nabla u(x)|\leq M$ for all $x\in \Om$, the  body behaves like an insulator, but as soon as this condition is violated,  the body starts to conduct. It is thus reasonable to model this by functional   of the form~\eqref{eq: infinite variational prob.} since the dielectric breakdown    occurs due to pointwise excessive energy.

In the model case ~\eqref{eq:infinite
   harmonic}, Aronsson introduced the idea of  absolute minimizers  in his series of
papers  \cite{a1,a2,a3,a4}. His idea easily extends to the general
case ~\eqref{eq: infinite variational prob.}.   Precisely,
 let $U\subset\subset  \Om$ be an arbitrary open subset. Denote by $\lip(U)$ the  space of Lipschitz continuous functions on $U$ with respect to the standard Euclidean metric, and by $\lip_{\loc}(U)$
the space of locally Lipschitz continuous functions on $U$.  A function $u\in \lip_{\loc}(U)\cap C(\overline U)$ is called an
{\it absolute  minimizer  for $F$} on $U$ if for every open subset
$V\subset\subset U$  and $v\in \lip_{\loc}(V)\cap C(\overline V)$ with
  $u|_{\partial V}= v|_{\partial V}$, we have $\F(u,V)\le \F(v,V)$, that is,
$$ \underset{x\in
V}{\esup} F(x, \nabla u(x)) \le  \underset{x\in
V}{\esup} F(x, \nabla v(x)).$$ Moreover, given a
function $f\in \lip (\partial U)$,  $u\in \lip_{\loc}(U)\cap
C(\overline U)$ is called an
 {\it absolutely minimizing Lipschitz extension } of $f$ on $U$ with respect to $F$ if $u$ is an
absolute  minimizer  for $F$ on $U$ and $u|_{\partial U}=f$. In
literature, an absolute minimizer of the model case
~\eqref{eq:infinite harmonic} is also termed as an \textit{infinity
harmonic function} in $U$.

Aronsson~\cite{a3} proved the existence of absolute minimizers
 for ~\eqref{eq:infinite
   harmonic} with given Lipschitz Dirichlet boundary data. His approach is as
follows:  for a given bounded domain $\Omega\subset\bR^n$ and
$g\in \Lip(\partial \Omega)$, find the ``best" Lipschitz extension
of $g$ to $\Omega$. By the best extension, we mean that the
function should satisfy the condition
\begin{equation}\label{eq:best extension}
    L_u(V)=L_u(\partial V)\quad \text{ for all } V\subset \Omega,
\end{equation}
where $L_u(E):=\sup_{x,y\in E}\frac{u(y)-u(x)}{|y-x|}$ denotes the
smallest Lipschitz constant of $u$ in a set $E$. A brief review of the motivation of this approach can be found in Jensen's seminal work~\cite{j93}. Notice that for
any function $v$ in any domain $V$ we have $L_v(V)\geq
L_v(\partial V)$. It is not a trivial work to attain the equality in \eqref{eq:best extension}. We can easily find the following Lipschitz extensions of
$g$ by the McShane(-Whitney) extension
\begin{align*}
\Psi(x)&:=\inf_{y\in \partial\Omega}\Big(g(y)+L_g(\partial \Omega)|y-x|\Big)\\
\Phi(x)&:=\inf_{y\in \partial\Omega}\Big(g(y)-L_g(\partial
\Omega)|y-x|\Big).
\end{align*}
But $\Psi$ and $\Phi$  do not satisfy~\eqref{eq:best extension}
except $\Psi\equiv \Phi$; see \cite{a3}. It turns out that the functions
which satisfy \eqref{eq:best extension} are exactly the ones that
are infinity harmonic; see \cite{acj}.

Aronsson~\cite{a3} also formally derived the infinity Laplace
equation
\begin{equation}\label{eq:infinitey Laplace equation}
    \Delta_\infty u(x):=\big(D^2u(x)\nabla u(x)\big)\cdot \nabla u(x)=0,
\end{equation}
as the Euler-Lagrange equation of the variational problem ~\eqref{eq:infinite
   harmonic}, where $D^2u(x)$ denotes the Hessian of $u$ at $x$. Aronsson proved that a $C^2$-solution $u$ is
infinity harmonic if and only if it satisfies \eqref{eq:infinitey Laplace equation}. Of course
 at that time, he did not have the right tools to interpret the equation
  \eqref{eq:infinitey Laplace equation} for non-smooth functions. This was
   a major problem since there are non-smooth infinity harmonic functions,
    such as $u(x,y)=y^{4/3}-x^{4/3}$ in the plane.

After the development of the viscosity solution theory
 by Crandall and Lions in the 1980's, Jensen~\cite{j93} proved  that  infinity harmonic functions
are  viscosity solutions to equation~\eqref{eq:infinitey Laplace equation} and vice vesa,  under given Dirichlet boundary data. He also proved the existence and the uniqueness
results of infinity harmonic functions under more general Dirichlet
boundary data. As already remarked by Jensen~\cite{j93},  his  uniqueness 
approach uses equation \eqref{eq:infinitey Laplace equation} intensively. 
Thus, it seems hard to extend his  uniqueness approach to more general cases in which one can not derive an equation of type  \eqref{eq:infinitey Laplace equation} from the $L^{\infty}$ variational problem.

 More recent proofs for the uniqueness of Jensen~\cite{j93} can be found in 
Crandall, Gunnarsson and Wang~\cite{cgw07}, Barles and Busca~\cite{bb01}, and 
Armstrong and Smart~\cite{as}. Among these proofs, the key idea, to derive 
the uniqueness result for ~\eqref{eq:infinite harmonic}, is to use the
characterization of infinity harmonic functions via comparison
with cones, which was first properly stated by Crandall, Evans and
Gariepy \cite{ceg}. To gain some intuition, observe that for all
$a>0$, the cone function $C(x):=a|x|$ is a smooth solution of the
infinity Laplace equation \eqref{eq:infinitey Laplace equation} in
$\bR^n\backslash \{0\}$. This can be easily seen by noticing that
\begin{equation}\label{eq:cone function via gradient}
    |\nabla C(x)|=a\quad \text{ for all } x\neq 0.
\end{equation}
Differentiating \eqref{eq:cone function via gradient}, one easily obtains $\Delta_\infty C(x)=0$ in 
$\bR^n\backslash \{0\}$. In this regard, the cone function is a sort of fundamental solutions to the infinity Laplace equation and the comparison with cones is a sort of (weak) comparison principles. In~\cite{ceg}, a very elegant proof is used to show the equivalence of being infinity harmonic and satisfying the comparison with cones. We would like to point out that the comparison with cones has turned out to be a fruitful point of view, and for example, Savin's proof \cite{s05} for $C^1$ regularity of infinity harmonic functions in the plane is entirely based on cones; see also~\cite{es,wy}.

Our main aim of this paper is to consider the existence and the uniqueness results for the minimization
 problem~\eqref{eq: infinite variational prob.} associated with a very general Finsler structure $F$.
  The typical feature is that we impose very less regularity on $F$. In particular, in our case,
  there is no PDE associated to the variational problem~\eqref{eq: infinite variational prob.}
  and hence standard techniques from elliptic PDEs are not
  available. Our main result reads as follows.

\begin{theorem}\label{thm:main intro}
    Let $F:\Om\times\R^n\to \R$ be an admissible Finsler structure on $\Om$. Then for each open subset $U\ssub \Om$ and each boundary data
    $f\in\lip(\partial U)$, there exists a unique absolutely minimizing
    Lipschitz extension on $U$ with respect to $F$.
\end{theorem}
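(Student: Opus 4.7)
The plan is to mimic the comparison-with-cones approach of \cite{ceg,as} for the classical infinity Laplacian, with cones replaced by the intrinsic distance functions attached to the Finsler structure $F$. Since no Euler--Lagrange PDE is available when $F$ is merely Borel measurable, every step must be driven by the variational definition and by metric quantities extracted directly from $F$. I would split the argument into three pieces: a metric dictionary translating the functional $\F(\cdot,V)$ into a genuine length distance $d_F$; existence of an absolute minimizer by Perron's method between two McShane--Whitney barriers built from $d_F$; and uniqueness via a metric comparison-with-metric-cones principle.

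First I would introduce the intrinsic distance
\[
d_F(x,y) := \inf_\gamma \int_0^1 F^{*}\bigl(\gamma(t),\dot\gamma(t)\bigr)\,dt,
\]
where $\gamma$ ranges over Lipschitz curves in $\Om$ joining $x$ to $y$ and $F^{*}$ denotes the polar dual of $F$ in its second variable. The admissibility hypotheses (convexity, positive homogeneity and two-sided bounds in the second variable) should guarantee that $d_F$ is comparable to the Euclidean distance and lower semicontinuous. The key dictionary lemma would then read: for $V\ssub\Om$, $u\in\lip(V)$, and $L\ge 0$,
\[
\esup_{x\in V}F(x,\nabla u(x))\le L \quad\Longleftrightarrow\quad u(y)-u(x)\le L\,d_F(x,y)\ \ \forall\,x,y\in V.
\]
The forward direction combines Rademacher with integration along almost-optimal curves for $d_F$, using the pointwise duality $\langle\xi,v\rangle\le F(x,\xi)\,F^{*}(x,v)$; the reverse direction is differentiation along Euclidean lines together with another application of duality.

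With this dictionary in hand, existence follows the Perron recipe. Writing $L_f$ for the sharp $d_F$-Lipschitz constant of $f$ on $\pa U$, the two canonical extensions
\[
\Phi(x):=\sup_{y\in\pa U}\bigl(f(y)-L_f\,d_F(x,y)\bigr),\qquad \Psi(x):=\inf_{y\in\pa U}\bigl(f(y)+L_f\,d_F(y,x)\bigr)
\]
are the minimal and maximal $L_f$-$d_F$-Lipschitz functions on $\overline U$ that agree with $f$ on $\pa U$. A standard verification shows that the pointwise supremum of any chain of absolute minimizers sandwiched between $\Phi$ and $\Psi$ is again an absolute minimizer, so Zorn's lemma produces a (maximal) absolute minimizer with boundary data $f$.

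Uniqueness is where I expect the main obstacle. Lacking any PDE, I would replace ``comparison with cones'' by comparison with \emph{metric cones} of the form $\varphi(x)=a\pm b\,d_F(x,x_0)$: the target statement is that, for every absolute minimizer $u$ on $V\ssub\Om$ and every such $\varphi$, the function $u-\varphi$ satisfies the maximum principle on $V\setminus\{x_0\}$. The classical cut-and-paste argument of \cite{ceg} does not transplant verbatim, because it relies on the eikonal identity $F(x,\nabla d_F(\cdot,x_0))=1$ almost everywhere and on $d_F$-balls being genuine sublevel sets of $\varphi$, both of which are subtle when $F$ is only Borel measurable; isolating the minimal regularity of $d_F$ needed to push through this step is the technical heart of the proof. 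Once comparison with metric cones is established, the tight-cone monotonicity scheme of Armstrong--Smart \cite{as}, being purely metric, transfers verbatim and yields both an interior comparison principle for absolute minimizers and, from it, the uniqueness of the absolutely minimizing Lipschitz extension.
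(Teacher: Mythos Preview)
Your architecture is the paper's: a metric dictionary translating $\F(\cdot,V)$ into Lipschitz data for an intrinsic distance, existence from the metric picture, comparison with metric cones, and then the Armstrong--Smart scheme for uniqueness. The execution, however, has two genuine gaps.

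The first is the distance you choose. Your $d_F(x,y)=\inf_\gamma\int F^*(\gamma,\dot\gamma)$ is not stable under modification of $F$ on Lebesgue-null sets, and your forward implication in the dictionary fails as written: from $F(x,\nabla u(x))\le L$ for a.e.\ $x$ you cannot bound $\int_0^1\langle\nabla u(\gamma),\dot\gamma\rangle\,dt$ by $L\int_0^1 F^*(\gamma,\dot\gamma)\,dt$ along an \emph{arbitrary} almost-optimal curve, because $\gamma$ may spend positive $t$-measure in the exceptional $x$-set. The paper sidesteps this by working with the variational distance $\delta_F(x,y)=\sup\{u(x)-u(y):\|F(\cdot,\nabla u)\|_\infty\le 1\}$, for which that forward direction is tautological, together with the refined length distance $d_c^*$ (a supremum over null sets is built into its definition). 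The non-trivial content is that $\delta_F$ and $d_c^*$ agree \emph{infinitesimally} (Lemma~\ref{lemma:key lemma}), and the dictionary is then proved at the level of \emph{pointwise} Lipschitz constants: $\esup_V F(\cdot,\nabla u)=\sup_V\Lip_{\delta_F}u$ (Proposition~\ref{prop:essential differential coincide with distance}). With the dictionary at that level, the cone $b+a\,\delta_F(\cdot,x_0)$ automatically has $\Lip_{\delta_F}$ identically $a$, so your eikonal worry dissolves and the cut-and-paste argument for comparison with cones goes through without any separate regularity statement for the distance function.

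The second gap is the existence step. Your Zorn argument, as phrased, is applied to the poset of \emph{absolute minimizers} between $\Phi$ and $\Psi$; but this poset is not known to be non-empty (that is precisely what you are trying to prove), and in any case ``the pointwise supremum of a chain of absolute minimizers is again an absolute minimizer'' is not a pointwise sub/super statement and does not follow from the definition. The viable Perron route runs on functions satisfying \emph{one-sided} cone comparison, which \emph{is} stable under suprema, and only at the end shows the maximal element satisfies the other side. The paper instead uses the pointwise dictionary to recast the problem as the metric AMLE problem on the length space $(U,\delta_F)$ and invokes the known existence result of Juutinen--Shanmugalingam \cite{jn}.
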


When $F(x,v):=\langle A(x)v, v\rangle$, where $A$ is a diffusion matrix-valued function, Theorem~\ref{thm:main intro} reduces to Theorem 5 in \cite{ksz-2014}. Thus, our main result can be regarded as a natural generalization of \cite[Theorem 5]{ksz-2014} from the diffusion case to more general Finsler case.

Although the general principle behind the proof of Theorem~\ref{thm:main intro} is similar to \cite[Theorem 5]{ksz-2014}, our approach (for the existence) is substantially different from \cite{ksz-2014}. Indeed, the proof given in \cite{ksz-2014} depends heavily on the speciality of the structure $F(x,v):=\langle A(x)v, v\rangle$ and seems not to be easily generalisable to our case.

For the existence result, our proof relies on the (crucial) Lemma
\ref{l4.5} and Proposition \ref{prop:essential differential
coincide with distance}, which allows us to describe the absolute
minimizer via the pointwise Lipschitz constant. In this step, we
also borrow some ideas from the recent related work \cite{g14},
which allow us to relate the geometric and the analytic aspects of
admissible Finsler structures.

For the uniqueness result, we follow closely the idea of \cite{as} and \cite{ksz-2014}, that is, we first characterize absolute minimizers for the variational problem \eqref{eq: infinite variational prob.} via comparison with cones, very similar to the infinity Laplace case. Then we establish the comparison with cones  as  in \cite{as}. Somewhat surprisingly, we do not really need equations (as in the infinity Laplace case) to effectively use the comparison with cones.

We remark here that there are many natural questions that can be done after this work. First of all, one could consider the linear approximation property for admissible Finsler structures with extra smoothness assumption as in \cite[Section 6]{ksz-2014} and the regularity issues as in \cite{es12} and \cite{swz14}. The second possible direction is to generalize these results to certain metric measure spaces as in \cite{jn,kz,ksz}.

This paper is organized as follows. In Section 2, we recall some
preliminary results on admissible Finsler sructures and the
associated (intrinsic) distances. In Section 3, we prove one of
the key results, namely, Proposition \ref{prop:essential
differential coincide with distance}. The proof of Theorem
\ref{thm:main intro} is contained in Section 4, as a special case
of the more general Theorem~\ref{thm:  Existence and Uniqueness}.
An alternative proof of Lemma \ref{lemma:key lemma} is provided in
the appendix.

%{\bf Notations.}

Throughout the paper, we use $|\cdot|$ and $\langle\cdot,\cdot\rangle$ to denote the standard  norm and  inner product of Euclidean spaces.

\section{Preliminaries}
\subsection{Finsler structure and its dual}

Let $\Omega\subset\bR^n$, $n\geq 2$, be a domain. A Finsler structure on $\Om$ is defined as follows.

\begin{definition}[Finsler structure]\label{def:finsler structure}
We say that a function $F:\Omega\times \bR^n\to [0,\infty)$ is a Finsler structure on $\Omega$ if
\begin{itemize}
\item $F(\cdot,v)$ is Borel measurable for all $v\in \bR^n$, $F(x,\cdot)$ is continuous for a.e. $x\in \Omega$;
\item $F(x,v)>0$ for a.e. $x$ if $v\neq 0$;
\item $F(x,\lambda v)=|\lambda| F(x,v)$ for all $\lambda\in \bR$ and $(x,v)\in \Omega\times \bR^n$.
%\item $F(x,-v)=F(x,v)$.
\end{itemize}
\end{definition}
It turns out to be too general for us to study   the $L^\wq$-variational problem (\ref{eq: infinite variational prob.}) in the context of Finsler structures. We will restrict ourselves to the class of  admissible Finsler structures.
\begin{definition}[Admissible Finsler structure]\label{def:admissible finsler stru}
A Finlser structure $F$ on $\Om$ is said to be admissible if
\begin{itemize}
\item $F(x,\cdot)$ is convex for a.e. $x\in \Omega$;
\item $F$ is locally equivalent to the Euclidean norm. That is, there exists a continuous function $\lambda:\Omega\to [1,\infty)$ such that
\begin{align*}
{\lambda(x)}^{-1}|v|\leq F(x,v)\leq \lambda(x)|v|.
\end{align*}

\end{itemize}
\end{definition}

For any admissible Finsler structure  $F$ on $\Omega$,  we introduce the dual $F^*:\Omega\times\bR^n\to [0,\infty)$ of $F$ in the following standard way.

\begin{definition}[Dual Finsler structure]\label{def:dual finsler} Let $F$ be an admissible Finsler structure on $\Omega$. We define  $F^*:\Omega\times\bR^n\to [0,\infty)$, the dual Finsler structure of  $F$ on $\Om$, by
\[
F^*(x,w):=\sup_{v\in \bR^n}\Big\{\langle v,w\rangle: F(x,v)\leq 1\Big\}.
\]
\end{definition}
We remark that it is direct to verify that
\begin{equation}\label{eq: dual Finsler structure formula}
  F^*(x,w)=\max_{v\in \R^n\backslash \{ 0\}}\left\langle w,\frac{v}{F(x,v)}\right\rangle .
\end{equation}
We shall need the following result on the properties of dual Finsler structures,  which can be found in~\cite[Section 1.2]{gpp06}.

\begin{proposition}[Basic properties of dual Finsler structures]\label{prop:basic prop of dual} Let $F$ be an admissible Finsler structure on $\Omega$. Then the dual Finsler structure $F^*$ has the following properties:
\begin{itemize}
\item $F^*(\cdot,v)$ is Borel measurable for all $v\in\bR^n$ and
$F^*(x,\cdot)$ is Lipschitz continuous for a.e. $x\in\Om$; \item
$F^*(x,\cdot)$ is a norm for a.e. $x\in \Om$;
%That is,
%$F^*(x,\lambda v)=\lambda F^*(x,v)$ for all $\lambda\geq 0$ and for all %$(x,v)\in \Omega\times\bR^n$;
\item Let  $\lambda(\cdot)$ be defined as in Definition \ref{def:admissible finsler stru}. Then $F^*(x,\cdot)$ satisfies that
\begin{align*}
{\lambda(x)}^{-1}|v|\leq F^*(x,v)\leq \lambda(x)|v|;
\end{align*}
\item $(F^*)^*(x,v)= F(x,v)$  for all $(x,v)\in \Omega\times\bR^n$.
\end{itemize}
\end{proposition}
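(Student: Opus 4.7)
The plan is to verify the four bullets in order, relying mainly on the definition \eqref{eq: dual Finsler structure formula} and the admissibility hypotheses on $F$.

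For the first bullet, I would fix $v\in\R^n$ and rewrite $F^*(x,v)$ as the supremum of $\langle v, w\rangle$ over the unit ball $\{w : F(x,w)\le 1\}$. Because $F(x,\cdot)$ is continuous for a.e.\ $x$ and $\R^n$ is separable, one can replace this supremum by one taken over a fixed countable dense subset of $\R^n$ (intersected with the unit ball of $F(x,\cdot)$); each such $x\mapsto \langle v,w\rangle\mathbf{1}_{\{F(x,w)\le 1\}}$ is Borel, and a countable supremum of Borel functions is Borel. For the Lipschitz continuity in $w$ at a.e.\ fixed $x$, I would use that $F^*(x,\cdot)$ is a supremum of linear functionals and hence convex, together with the upper bound $F^*(x,w)\le \lambda(x)|w|$ (proved in bullet three); a finite convex function on $\R^n$ is automatically locally Lipschitz, and the explicit bound gives a global Lipschitz constant $\lambda(x)$.

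For the second bullet, homogeneity $F^*(x,\lambda w) = |\lambda| F^*(x,w)$ is immediate from \eqref{eq: dual Finsler structure formula} and the evenness of $v\mapsto v/F(x,v)$ (which uses the absolute homogeneity in Definition~\ref{def:finsler structure}). Subadditivity follows because $F^*(x,\cdot)$ is a pointwise supremum of linear functionals and so is convex, which combined with homogeneity gives the triangle inequality. Finally $F^*(x,w)>0$ for $w\ne 0$ follows by choosing $v:= w$ in \eqref{eq: dual Finsler structure formula}: then $\langle w, v/F(x,v)\rangle = |w|^2/F(x,w)\ge \lambda(x)^{-1}|w|>0$ using the lower bound of admissibility.

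For the third bullet, the upper bound is direct from \eqref{eq: dual Finsler structure formula}: for any $v\ne 0$, $\langle w, v/F(x,v)\rangle \le |w||v|/F(x,v)\le \lambda(x)|w|$ by the lower admissibility bound on $F$. The lower bound follows by choosing the specific competitor $v=w$ (when $w\ne 0$), which gives $F^*(x,w)\ge |w|^2/F(x,w)\ge \lambda(x)^{-1}|w|$ by the upper admissibility bound on $F$.

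The fourth bullet is the main point, and I expect this to be where care is needed. Since $F(x,\cdot)$ is convex, positively homogeneous of degree one, and by admissibility is bounded above and below by multiples of $|\cdot|$ (so in particular positive off the origin), $F(x,\cdot)$ is a genuine norm on $\R^n$ for a.e.\ $x$. Bullets two and three then say the same for $F^*(x,\cdot)$. For fixed such $x$, $(F^*)^*(x,\cdot)=F(x,\cdot)$ is the classical biduality for norms on finite-dimensional spaces: the unit ball of $F(x,\cdot)$ is closed, convex, symmetric with nonempty interior, and one checks $\sup\{\langle v,w\rangle: F^*(x,w)\le 1\} = F(x,v)$ by a standard Hahn--Banach / separation argument. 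The only subtlety is to ensure all preceding properties hold simultaneously on a single full-measure set of $x$, which is achieved by intersecting the (countably many) exceptional sets coming from Definition~\ref{def:finsler structure}, Definition~\ref{def:admissible finsler stru}, and bullets one through three above.
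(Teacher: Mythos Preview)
The paper does not actually prove this proposition; it merely cites \cite[Section~1.2]{gpp06}. Your direct verification is essentially correct and supplies what the paper omits. Two minor technical caveats are worth flagging. First, your Borel measurability argument replaces the supremum by one over a countable dense set $D\subset\R^n$, which requires continuity of $F(x,\cdot)$ to ensure the two suprema agree; since that continuity holds only for a.e.\ $x$, your argument produces a Borel function equal to $F^*(\cdot,w)$ almost everywhere, hence Lebesgue (not literally Borel) measurability of $F^*(\cdot,w)$. This is arguably an imprecision in the stated proposition rather than in your proof, and it is immaterial for the sequel. Second, you correctly note that biduality $(F^*)^*(x,\cdot)=F(x,\cdot)$ can only be asserted at those $x$ where $F(x,\cdot)$ is a genuine convex norm, i.e.\ for a.e.\ $x$; the ``for all $(x,v)$'' in the statement should be read accordingly.
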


\subsection{Intrinsic distance associated to an admissible Finsler structure}
For any admissible Finsler structure $F$ on $\Omega$, we  associate $\Omega$ with an intrinsic distance by setting
\begin{align*}
d_c^F(x,y):=\sup_{N}\inf_{\gamma\in \Gamma_N^{x,y}}\left\{\int_0^1 F(\gamma(t),\gamma'(t))dt\right\}\quad \text{ for all } x,y\in \Omega,
\end{align*}
where the supremum is taken over all subsets $N$ of $\Omega$ such that $|N|=0$ and $\Gamma_N^{x,y}(\Omega)$ denotes the set of all Lipschitz continuous curves $\ga$ in $\Omega$ with end points $x$ and $y$  such that $\mathcal{H}^1(N\cap \gamma)=0$ with $\mathcal{H}^1$ being the one dimensional Hausdorff measure. Similarly, we define the distance $d^{F^*}_c$ by
 \begin{align*}
d_c^{F^*}(x,y):=\sup_{N}\inf_{\gamma\in \Gamma_N^{x,y}}\left\{\int_0^1 F^*(\gamma(t),\gamma'(t))dt\right\} \quad \text{ for all } x,y\in \Omega.
\end{align*}
For notational simplicity, we write $d_c^*=d_c^{F^*}$ below.
We also need the following intrinsic distance function $\delta_{F}$ defined as
\[
\delta_{F}(x,y):=\sup\left\{u(x)-u(y):u\in\lip(\Omega),\|F(x,\nabla u)\|_{L^{\infty}(\Om)}\le 1\right\}.
\]
For the definition of the class of intrinsic distances, we refer to~\cite[Section 3]{dp95} or~\cite[Section 1.1]{gpp06}, and it will not play any role in this paper.
%\marginpar{\small  originally in the definition of $\delta_F$ we use $ u(x)-u(y)$. }

The following lemma implies that $\delta_{F}$ is  actually the same as $d_c^*$ at infinitesimal scale. The proof can be found in~\cite[Theorem 3.7]{dp93}. In the special case when $F$ is (weak) upper semicontinuous, a simpler proof can be found in~\cite[Proposition 3.1]{g14}.

\begin{lemma}\label{lemma:key lemma}
Let $F$ be an admissible Finsler structure on $\Omega$. Then for a.e. $x\in \Omega$, we have
\begin{align*}
\lim_{y\to x}\frac{\delta_F(x,y)}{d_c^*(x,y)}=1.
\end{align*}
\end{lemma}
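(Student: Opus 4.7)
The plan is to squeeze both $\delta_F(x,y)$ and $d_c^*(x,y)$ between $(1\pm o(1))F^*(x,y-x)$ as $y\to x$, at a.e. $x\in\Omega$. Here $y\mapsto F^*(x,y-x)$ plays the role of the ``frozen'' infinitesimal distance at the base point and serves as a common comparison quantity; since both $\delta_F$ and $d_c^*$ are uniformly comparable to the Euclidean distance by Definition \ref{def:admissible finsler stru} and Proposition \ref{prop:basic prop of dual}, it suffices to establish
\begin{equation*}
\delta_F(x,y)\le d_c^*(x,y),\qquad \delta_F(x,y)\ge(1-o(1))F^*(x,y-x),\qquad d_c^*(x,y)\le(1+o(1))F^*(x,y-x).
\end{equation*}

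First I would prove the global inequality $\delta_F\le d_c^*$. Given $u\in\lip(\Omega)$ with $\|F(\cdot,\nabla u)\|_{L^\infty(\Omega)}\le 1$, let $N_u$ be the null set on which either $\nabla u$ fails to exist or $F(z,\nabla u(z))>1$. For any null $N\supset N_u$ and any $\gamma\in\Gamma_N^{x,y}$, the chain rule for Lipschitz functions combined with the Young-type inequality $\langle v,w\rangle\le F(z,v)F^*(z,w)$, a direct consequence of Definition \ref{def:dual finsler} and the reflexivity $(F^*)^*=F$, yields
\begin{equation*}
u(x)-u(y)=\int_0^1\nabla u(\gamma(t))\cdot\gamma'(t)\,dt\le\int_0^1 F^*(\gamma(t),\gamma'(t))\,dt.
\end{equation*}
Minimizing over $\gamma$, then taking the supremum over $N$ and over admissible $u$, gives the desired bound.

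Next I would restrict attention to the full-measure set of points $x$ at which $F(x,\cdot)$ and $F^*(x,\cdot)$ are norms, $\lambda(\cdot)$ is continuous at $x$, and $F(\cdot,v)$ is approximately continuous at $x$ for every $v$ in a countable dense set $D\subset\R^n$. At such $x$, pick $w_0\in\R^n$ attaining $F^*(x,y-x)=\langle w_0,y-x\rangle/F(x,w_0)$ (approximating by $D$ is harmless by the local equivalence with the Euclidean norm). For the lower bound on $\delta_F$, consider the affine function $\ell(z):=\langle w_0,z-y\rangle/F(x,w_0)$, which satisfies $\ell(x)-\ell(y)=F^*(x,y-x)$ and $F(x,\nabla\ell)=1$. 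Multiplying $\ell$ by a Lipschitz cutoff supported in a small ball $B_r(x)$ and dividing by $1+\epsilon$, the approximate continuity of $F(\cdot,w_0)$ shows that the modified function lies in $\lip(\Omega)$ and satisfies $F(\cdot,\nabla\cdot)\le 1$ a.e.; taking $|y-x|\ll r$ and letting $\epsilon\to 0$ produces $\delta_F(x,y)\ge(1-o(1))F^*(x,y-x)$. For the upper bound on $d_c^*$, the straight segment $\gamma_0(t):=x+t(y-x)$ already satisfies $\int_0^1 F^*(\gamma_0(t),y-x)\,dt=(1+o(1))F^*(x,y-x)$ by approximate continuity of $F^*(\cdot,v)$ (which is inherited from that of $F$); to cope with the supremum over null sets $N$ in the definition of $d_c^*$, a Fubini argument produces arbitrarily small parallel translates of $\gamma_0$ meeting $N$ in one-dimensional Hausdorff measure zero whose $F^*$-lengths differ from that of $\gamma_0$ by $o(F^*(x,y-x))$.

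The main obstacle I expect is the cutoff construction in the second step: the gradient constraint $F(z,\nabla u(z))\le 1$ must hold pointwise a.e.\ throughout $\Omega$, not merely at $x$, while preserving $u(x)-u(y)\approx F^*(x,y-x)$. In the transition zone of the cutoff $\nabla u$ can temporarily overshoot the $F(z,\cdot)$-unit ball, and this excess must be absorbed into the $(1+\epsilon)^{-1}$ rescaling by exploiting convexity of $F(z,\cdot)$ together with local equivalence with the Euclidean norm. The Fubini-type perturbation in the third step is comparatively softer, but still requires clean bookkeeping of the dependence among the null set $N$, the perturbation direction, and the modulus of approximate continuity of $F^*$.
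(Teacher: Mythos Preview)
Your easy direction $\delta_F\le d_c^*$ is correct and coincides with the paper's first step.

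The genuine gap is in your lower bound $\delta_F(x,y)\ge(1-o(1))F^*(x,y-x)$, and it is not located where you think. You build the competitor by cutting off the affine function $\ell(z)=\langle w_0,z-y\rangle/F(x,w_0)$ and dividing by $1+\epsilon$, invoking approximate continuity of $F(\cdot,w_0)$ at $x$ to conclude $F(\cdot,\nabla u)\le 1$ a.e. But approximate continuity is only a \emph{density} statement: the bad set $\{z\in B_r(x): F(z,w_0)>(1+\epsilon)F(x,w_0)\}$ has measure $o(r^n)$, not measure zero, and for a merely Borel $F$ it may have positive measure in every ball around $x$. Hence even on the region where the cutoff is identically $1$ the $L^\infty$ constraint $\|F(\cdot,\nabla u)\|_{L^\infty}\le 1$ fails on a set of positive measure, and dividing by any fixed $1+\epsilon$ cannot absorb an overshoot whose size is uncontrolled. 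The transition-zone difficulty you single out is secondary; the essential obstruction already sits in the interior of the cutoff. (Your Fubini scheme for the $d_c^*$ upper bound is on firmer ground, though parallel translates of $\gamma_0$ do not join $x$ to $y$; short end-caps must be appended and also kept transversal to $N$.)

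The paper's appendix avoids freezing coefficients altogether: it cites results of Davini and Davini--Ponsiglione to approximate $F$ by \emph{continuous} admissible structures $F_n$ with $d_c^{F_n^*}\to d_c^{F^*}$ and $\limsup_n\delta_{F_n}\le\delta_F$ locally uniformly, and then appeals to the known continuous case, where genuine continuity supplies exactly the uniform pointwise control you were trying to extract from approximate continuity. If you want to rescue a direct argument, abandon the affine competitor and test $\delta_F$ with $u(z)=d_c^*(z,y)$ itself: the triangle inequality together with \eqref{eq:nocoincideness of metric derivative and differential} gives $\langle\nabla u(z),v\rangle\le\Delta_{d_c^*}(z,v)\le F^*(z,v)$ for a.e.\ $z$ and all $v$, whence $F(z,\nabla u(z))=(F^*)^*(z,\nabla u(z))\le 1$ a.e.\ with no cutoff needed.
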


\subsection{Comparison of metric derivatives}
For any distance $d$ on $\Omega$ and any Lipschitz continuous (with respect to $d$) curve $\gamma:[a,b]\to \Omega$, the length of $\gamma$ with respect to $d$ is denoted by $\mathcal{L}_d(\gamma)$, that is,
\begin{align*}
\mathcal{L}_d(\gamma):=\sup\left\{\sum_{i=1}^k d(\gamma(t_i),\gamma(t_{i+1})) \right\},
\end{align*}
where the supremum is taken over all partitions $\{[t_i,t_{i+1}]\}$ of $[a,b]$.

For any intrinsic distance $d$, which is locally equivalent to the Euclidean distance, we define
\begin{align}\label{def:new finsler distance}
\Delta_d(x,v):=\limsup_{t\to 0}\frac{d(x,x+tv)}{t}
\end{align}
for all $x\in \Omega$ and  $v\in\R^n$. It turns out that $\Delta_d$ is a convex Finsler metric. Moreover, it can be proved that for every Lipschitz continuous curve $\gamma:[a,b]\to \Omega$, we have
\begin{align*}
\mathcal{L}_d(\gamma)=\int_a^b\Delta_d(\gamma,\gamma')dt.
\end{align*}
These facts can be found for instance in~\cite[Section 1.1]{gpp06}.

By~\cite[Proposition 1.6]{gpp06}, for an admissible Finsler structure $F$, one always has
\begin{align}\label{eq:nocoincideness of metric derivative and differential}
\Delta_{d_c^*}(x,v)\leq F^*(x,v).
\end{align}
However, for general Finsler structures, the strict inequality above can hold;  see e.g.~\cite[Example 5.1]{dp95}.

\section{Weak coincidence of differential structure and distance structure}

The following proposition is crucial in the proof of the existence part of Theorem \ref{thm:  Existence and Uniqueness} in Section \ref{sec: Existence and Uniqueness}. For its proper formulation, we recall that the pointwise Lipschitz constant function $\Lip_d u$ of a Borel function $u:\Omega\to \bR$ with respect to a distance $d$ is defined as
\begin{align*}
\Lip_{d}u(x):=\limsup_{y\to x}\frac{|u(y)-u(x)|}{d(x,y)}.
\end{align*}

\begin{proposition}\label{prop:essential differential coincide with distance}
Let $F$ be an admissible Finsler structure on $\Omega$. Then for every open set $V\ssub \Omega$ and every  function $u\in \Lip_{\loc}(\Omega)$, we have
\begin{equation*}
\underset{x\in V}{\esup} F(x,\nabla u(x))= \underset{x\in V}{\esup} \Lip_{\delta_F}u(x)=\sup_{x\in V}\Lip_{\delta_F}u(x).
\end{equation*}
\end{proposition}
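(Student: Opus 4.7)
The plan is to prove the chain
\[
\esup_{V} F(x,\nabla u(x))\;\le\;\esup_{V}\Lip_{\delta_F}u(x)\;\le\;\sup_{V}\Lip_{\delta_F}u(x)\;\le\;\esup_{V}F(x,\nabla u(x)),
\]
in which the middle step is trivial. The first inequality will come from an almost-everywhere pointwise bound $F(x,\nabla u(x))\le\Lip_{\delta_F}u(x)$, while the third requires the much stronger \emph{everywhere} bound $\Lip_{\delta_F}u(x)\le L$ at every $x\in V$, where $L:=\esup_{V}F(\cdot,\nabla u)$.

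For the pointwise lower bound I would restrict to the full-measure set on which $u$ is differentiable (Rademacher) and Lemma~\ref{lemma:key lemma} holds. At such an $x$, the duality $(F^*)^*=F$ from Proposition~\ref{prop:basic prop of dual} lets me pick $v\neq 0$ with $F(x,\nabla u(x))=\langle\nabla u(x),v\rangle/F^*(x,v)$. Along $y=x+tv$, $t\to 0^+$, differentiability gives $u(y)-u(x)=t\langle\nabla u(x),v\rangle+o(t)$, while Lemma~\ref{lemma:key lemma} combined with \eqref{eq:nocoincideness of metric derivative and differential} yields
\[
\delta_F(x,y)=(1+o(1))\,d_c^*(x,x+tv)\;\le\; (1+o(1))\,t\,F^*(x,v).
\]
Dividing and passing to the $\limsup$ produces $\Lip_{\delta_F}u(x)\ge F(x,\nabla u(x))$, and taking essential suprema gives the first inequality.

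For the everywhere upper bound I assume $L<\infty$, fix $x_0\in V$ and a ball $B=B(x_0,r)$ with $\overline{B}\ssub V$. The pointwise Young-type inequality $\langle w,v\rangle\le F(x,w)F^*(x,v)$ (immediate from Definition~\ref{def:dual finsler}) together with the chain rule along a generic Lipschitz curve $\gamma\subset B$ joining $y,z\in B(x_0,r/2)$ and avoiding $N_0:=\{w:F(w,\nabla u(w))>L\}\cup\{w:u\text{ not differentiable at }w\}$ gives $|u(y)-u(z)|\le L\int_0^1 F^*(\gamma,\gamma')\,dt$, and taking the infimum yields $|u(y)-u(z)|\le L\,d_c^*(y,z)$ locally. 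I then define the $d_c^*$-McShane extension
\[
\tilde u(w):=\inf_{z\in B(x_0,r/2)}\bigl(u(z)+L\,d_c^*(z,w)\bigr)
\]
and check that $\tilde u=u$ on $B(x_0,r/2)$ and that $\tilde u$ is $L$-Lipschitz with respect to $d_c^*$ on $\Omega$. At any point $w$ of differentiability the $d_c^*$-Lipschitz bound combined with \eqref{eq:nocoincideness of metric derivative and differential} yields $|\langle\nabla\tilde u(w),v\rangle|\le L\,F^*(w,v)$ for every $v$, whence $F(w,\nabla\tilde u(w))\le L$ almost everywhere by $(F^*)^*=F$. Applying the definition of $\delta_F$ to $\tilde u/L$ and using the symmetry $\delta_F(x,y)=\delta_F(y,x)$ (from $F(x,-v)=F(x,v)$) gives $|u(x_0)-u(y)|=|\tilde u(x_0)-\tilde u(y)|\le L\,\delta_F(x_0,y)$ for $y$ close to $x_0$, so $\Lip_{\delta_F}u(x_0)\le L$. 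Since $x_0\in V$ is arbitrary this proves the third inequality.

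The main obstacle is the extension step, because the definition of $\delta_F$ requires a competitor in $\lip(\Omega)$ with $\|F(\cdot,\nabla\cdot)\|_{L^\infty(\Omega)}\le 1$ on \emph{all} of $\Omega$, while the infimum-convolution $\tilde u$ is only $d_c^*$-Lipschitz and therefore a priori belongs merely to $\lip_{\loc}(\Omega)$. When $\Omega$ is unbounded this forces one either to restrict to a bounded intermediate domain $\Omega_0$ with $V\ssub\Omega_0\ssub\Omega$ (and check that the analogously-defined localized $\delta_F$ agrees with $\delta_F$ on $V$ at infinitesimal scale, which follows from the Euclidean equivalence in Definition~\ref{def:admissible finsler stru}) or to introduce a suitable cut-off that preserves the $F$-gradient bound. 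Once this is handled, the remaining passage from the $d_c^*$-Lipschitz bound to the pointwise $F$-gradient bound on $\tilde u$ is routine given Proposition~\ref{prop:basic prop of dual}.
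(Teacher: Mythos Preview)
Your chain of inequalities is exactly the one the paper proves, and your argument for the a.e.\ pointwise bound $F(x,\nabla u(x))\le \Lip_{\delta_F}u(x)$ is the same as the paper's: duality $(F^*)^*=F$, Lemma~\ref{lemma:key lemma}, and the inequality~\eqref{eq:nocoincideness of metric derivative and differential}. The paper phrases it via the homogeneity reduction ``if $\Lip_{\delta_F}u(x)\le 1$ then $F(x,\nabla u(x))\le 1$'', but the content is identical.

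The only real difference is in the step $\sup_V\Lip_{\delta_F}u\le \esup_V F(\cdot,\nabla u)$. The paper dispatches this in one line: after normalizing so that $\esup_V F(\cdot,\nabla u)\le 1$, it simply invokes the definition of $\delta_F$ to conclude $|u(x)-u(y)|\le\delta_F(x,y)$ for $x,y\in V$, treating $u$ itself as an admissible test function. This tacitly assumes the $F$-gradient bound holds on all of $\Omega$, not just on $V$; the extension/localization issue you flag as ``the main obstacle'' is precisely what the paper glosses over. Your longer route---curve integration to get a local $d_c^*$-Lipschitz bound, $d_c^*$-McShane extension $\tilde u$, then $F(\cdot,\nabla\tilde u)\le L$ a.e.\ via~\eqref{eq:nocoincideness of metric derivative and differential} and duality, and finally feeding $\tilde u/L$ into the definition of $\delta_F$---is a correct way to make this step rigorous, and your proposed fix (pass to an intermediate bounded $\Omega_0$ with $V\ssub\Omega_0\ssub\Omega$, where $d_c^*$ is uniformly comparable to the Euclidean metric so that $\tilde u\in\lip(\Omega_0)$) is the natural remedy for the residual $\lip$ vs.\ $\lip_{\loc}$ issue. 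So your proof is essentially the paper's, but with the extension step that the paper suppresses written out in full.
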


\begin{proof}
Let $V\ssub \Omega$ and  $u\in \Lip_{\loc}(\Omega)$ be an arbitrary Lipschitz continuous function. We first show that
\begin{equation}\label{eq: 3.1.1}
 \underset{x\in V}{ \esup} F(x,\nabla u(x))\geq \sup_{x\in V}\Lip_{\delta_F}u(x).
\end{equation}
Since both sides of (\ref{eq: 3.1.1}) are positively 1-homogeneous with respect to $u$, 
we only need to show that if $\esup_{x\in V}F(x,\nabla u(x))\leq 1$, then $\sup_{x\in V}\Lip_{\delta_F}u(x)\leq 1$.

By the definition of $\delta_F$, if $\esup_{x\in V}F(x,\nabla u(x))\leq 1,$
then $|u(x)-u(y)|\leq \delta_F(x,y)$ for all $x,y\in V$, which implies that
\begin{align*}
 \sup_{x\in V}\Lip_{\delta_F}u(x)\leq 1.
\end{align*}
Thus, we obtain (\ref{eq: 3.1.1}).

We next show that
\begin{equation}\label{eq: 3.1.2}
 \underset{x\in V}{ \esup} F(x,\nabla u(x))\leq  \underset{x\in V}{ \esup} \Lip_{\delta_F}u(x).
\end{equation}
Since both sides of  (\ref{eq: 3.1.2}) are positively
1-homogeneous with respect to $u$, we only need to show that for
a.e. $x\in V$, if $\Lip_{\delta_F}u(x)\leq 1$, then $F(x,\nabla
u(x))\leq 1$.

Note that by Lemma~\ref{lemma:key lemma},
$\Lip_{\delta_F}u(x)=\Lip_{d_c^*}u(x)$ for a.e. $x\in \Omega$. Fix
such an $x$. For each $v\in \bR^n$, we have
\begin{align*}
\langle \nabla u(x),v \rangle&=\lim_{t\to 0}\frac{u(x+tv)-u(x)}{t}\\
&\leq \limsup_{t\to 0}\frac{d_c^*(x+tv,x)}{t}\limsup_{t\to 0}\frac{u(x+tv)-u(x)}{d_c^*(x+tv,x)}\\
&\leq \Delta_{d_c^*}(x,v)\Lip_{d_c^*}u(x)\\
&\leq \Delta_{d_c^*}(x,v)\leq F^*(x,v),
\end{align*}
where the last inequality follows from~\eqref{eq:nocoincideness of metric derivative and differential}. Therefore, by Proposition \ref{prop:basic prop of dual} we have
\begin{align*}
F(x,\nabla u(x))&=F^{**}(x,\nabla u(x))
=\sup_{v\neq 0}\left\langle\nabla u(x),\frac{v}{F^*(x,v)}\right\rangle\leq 1.
\end{align*} This proves (\ref{eq: 3.1.2}).

Combining (\ref{eq: 3.1.1}) and (\ref{eq: 3.1.2}) gives us that
\begin{align*}
 \underset{x\in V}{ \esup} F(x,\nabla u(x))&\leq  \underset{x\in V}{ \esup}\Lip_{\delta_F}u(x)\\
&\leq \sup_{x\in V}\Lip_{\delta_F}u(x)\\
&\leq  \underset{x\in V}{ \esup} F(x,\nabla u(x)),
\end{align*}
which implies that  all the inequalities above are actually equalities. The proof of Proposition~\ref{prop:essential differential coincide with distance} is complete.
\end{proof}

\section{Existence and Uniqueness }\label{sec: Existence and Uniqueness}

Let $F$ be an admissible Finsler structure on $\Om$ and
$U\subset\subset\Omega$. In this section we prove the following
theorem.

\begin{theorem} \label{thm:  Existence and Uniqueness}
{\rm(i)} For every
$f\in\lip(\partial U)$, there exists a unique absolutely minimizing
Lipschitz extension on $U$ with respect to $F$.

{\rm(ii)} The absolute minimizer is completely determined by the intrinsic
distance in the following sense: let $\delta_{F}$ and $\delta_{\tilde{F}}$
be the intrinsic distance associated with the admissible Finsler structures $F$ and $\tilde{F}$, respectively.
If for almost all $x\in U$ there holds
\begin{equation}
\lim_{x\ne y\to x}\frac{\delta_{F}(x,\, y)}{\delta_{\tilde{F}}(x,\, y)}=1,\label{e4.xx1}
\end{equation}
 then $u$ is an absolute minimizer on $U$
for $F$ if and only if $u$ is an absolute minimizer on $U$ for
$\tilde{F}$.
\end{theorem}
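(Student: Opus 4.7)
\emph{Strategy.} The argument naturally splits into existence, uniqueness, and the invariance statement (ii). The unifying tool is Proposition \ref{prop:essential differential coincide with distance}, which recasts $\F(u;V)$ as the supremum of the pointwise Lipschitz constant $\Lip_{\delta_F}u$. Consequently, an absolute minimizer for $F$ on $U$ is nothing but an absolutely minimizing Lipschitz extension in the length space $(U,\delta_F)$, and the whole problem can be handled metrically.

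\emph{Existence.} Since $F$ is admissible, $\delta_F$ is locally biLipschitz equivalent to the Euclidean distance, so $(U,\delta_F)$ is a compact length space on which $f$ is $\delta_F$-Lipschitz. I would then construct an absolute minimizer by Perron's method in this metric setting: take the supremum of a suitable family of $\delta_F$-subsolutions (characterized by comparison with $\delta_F$-cones from above) lying below the McShane--Whitney extension of $f$ built with $\delta_F$ in place of the Euclidean norm. Standard envelope arguments show that this supremum attains the boundary data and enjoys comparison with $\delta_F$-cones from both sides. Lemma \ref{l4.5}, together with Proposition \ref{prop:essential differential coincide with distance}, is then invoked to convert this metric property into the variational inequality defining an absolute minimizer for $F$.

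\emph{Uniqueness.} I would follow Armstrong--Smart verbatim, with the Euclidean distance replaced throughout by $\delta_F$. The key intermediate step is the metric characterization: $u$ is an absolute minimizer for $F$ on $U$ if and only if, for every $V\ssub U$, every $x_0\in\overline V$, and every $a\ge 0$,
\begin{equation*}
\max_{\overline V}\bigl(u(\cdot)-a\,\delta_F(\cdot,x_0)\bigr)=\max_{\partial V\setminus\{x_0\}}\bigl(u(\cdot)-a\,\delta_F(\cdot,x_0)\bigr),
\end{equation*}
together with the symmetric statement from below. One direction is immediate from Proposition \ref{prop:essential differential coincide with distance}, since the cone $a\,\delta_F(\cdot,x_0)$ satisfies $\Lip_{\delta_F}\le a$ and is therefore an admissible competitor; the reverse direction uses the length-space structure of $\delta_F$ to reduce $\sup_V\Lip_{\delta_F}u$ to its boundary value. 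With comparison with cones in hand, the Armstrong--Smart increasing-slope perturbation argument yields uniqueness without ever invoking a PDE. The main obstacle, I expect, is verifying that these perturbations, originally carried out with Euclidean vectors, survive when all distances are measured by $\delta_F$; here the triangle inequality and the geodesic property of $\delta_F$ must replace the vector-algebraic manipulations used in the Euclidean setting.

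\emph{Part (ii).} The invariance is a direct corollary of Proposition \ref{prop:essential differential coincide with distance}. Indeed, if \eqref{e4.xx1} holds at $x$, then for every $u\in\lip_{\loc}(U)$,
\begin{equation*}
\Lip_{\delta_F}u(x)=\limsup_{y\to x}\frac{|u(y)-u(x)|}{\delta_{\tilde{F}}(x,y)}\cdot\frac{\delta_{\tilde{F}}(x,y)}{\delta_F(x,y)}=\Lip_{\delta_{\tilde{F}}}u(x),
\end{equation*}
since the ratio factor tends to $1$. Taking essential suprema over any $V\ssub U$ and applying Proposition \ref{prop:essential differential coincide with distance} to both $F$ and $\tilde{F}$ yields
\begin{equation*}
\esup_{x\in V}F(x,\nabla u(x))=\esup_{x\in V}\tilde{F}(x,\nabla u(x)),
\end{equation*}
so the two notions of absolute minimizer on $U$ coincide.
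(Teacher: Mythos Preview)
Your proposal is correct and follows essentially the same route as the paper: both reduce everything to the metric space $(U,\delta_F)$ via Proposition~\ref{prop:essential differential coincide with distance} and Lemma~\ref{l4.5}, establish uniqueness through the Armstrong--Smart argument with $\delta_F$-cones (the paper's Lemmas~\ref{l4.53} and~\ref{l4.9}), and obtain part~(ii) from the a.e.\ identity $\Lip_{\delta_F}u=\Lip_{\delta_{\tilde F}}u$. The only noticeable difference is in the existence step: the paper simply invokes \cite[Theorem~3.1]{jn} once the problem has been recast metrically, whereas you sketch the Perron construction that underlies that result; these are the same argument at different levels of abstraction. Two small imprecisions worth tightening: it is $(\overline U,\delta_F)$, not $(U,\delta_F)$, that is compact, and in your cone-comparison formulation the vertex $x_0$ should lie outside $V$ (as in the paper) rather than in $\overline V$.
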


Note that  Theorem \ref{thm:main intro} is the first part of Theorem \ref{thm:  Existence and Uniqueness}.  
The proof of Theorem \ref{thm:  Existence and
Uniqueness} is long and thus is divided into several lemmas. In
the following,  we first prove the existence part, and then the
uniqueness part  of Theorem \ref{thm: Existence and Uniqueness}(i). 
In the end of this section, we give a complete proof of
Theorem \ref{thm: Existence and Uniqueness}.

\subsection{Proof of existence} The following lemma is an analogy of \cite[Lemma 7]{ksz-2014}, which characterizes absolute minimizers via intrinsic distances.

\begin{lemma}\label{l4.5} Let $u\in\lip_{\loc}(U)$. Then $u$ is an absolute
minimizer on $U$ if and only if for each bounded open subset $V\subset\subset U$
and all $v\in\lip_{\loc}(V)\cap C(\overline{V})$ with $u|_{\partial V}=v|_{\partial V}$,
one (or both) of the following holds:

{\rm(i)} \noindent  $\esup_{x\in V}\lip_{\delta_{F}}u(x)\le{\esup}_{x\in V}\lip_{\delta_{F}}v(x);$

{\rm(ii)} \noindent  $\sup_{x\in V}\lip_{\delta_{F}}u(x)\le{\sup}_{x\in V}\lip_{\delta_{F}}v(x).$
\end{lemma}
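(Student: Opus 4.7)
The plan rests on Proposition \ref{prop:essential differential coincide with distance}, which asserts the triple identity
$$\esup_{x\in V}F(x,\nabla w(x))=\esup_{x\in V}\Lip_{\delta_F}w(x)=\sup_{x\in V}\Lip_{\delta_F}w(x)$$
for any $w\in\Lip_{\loc}(\Omega)$ and any $V\ssub\Omega$. Granted this identity for both $u$ and an arbitrary competitor $v$, the defining inequality $\esup_V F(\cdot,\nabla u)\le\esup_V F(\cdot,\nabla v)$ for an absolute minimizer translates literally into each of (i) and (ii), proving all three formulations equivalent on every test set $V$. Note that (i) and (ii) themselves also become identical, which accounts for the ``one (or both)'' phrasing in the statement.

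Unpacking the definition, $u\in\lip_{\loc}(U)$ is an absolute minimizer on $U$ iff for each $V\ssub U$ and each $v\in\lip_{\loc}(V)\cap C(\overline V)$ with $u|_{\partial V}=v|_{\partial V}$ one has $\esup_V F(\cdot,\nabla u)\le \esup_V F(\cdot,\nabla v)$. Applying Proposition \ref{prop:essential differential coincide with distance} to $u$ on $V$ is straightforward: since $V\ssub U$, the set $\overline V$ is a compact subset of $U$, so $u$ is Lipschitz on some open neighborhood of $\overline V$, and a McShane--Whitney extension produces $\tilde u\in\Lip_{\loc}(\Omega)$ agreeing with $u$ near $\overline V$. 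Then $\nabla\tilde u=\nabla u$ a.e.\ on $V$ and $\Lip_{\delta_F}\tilde u(x)=\Lip_{\delta_F}u(x)$ for every $x\in V$ (the limsup defining $\Lip_{\delta_F}$ only samples values in a small neighborhood of $x$, where the two functions agree), so the triple identity transfers to $u$ on $V$.

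The competitor $v\in\lip_{\loc}(V)\cap C(\overline V)$ requires a slightly more delicate argument because $v$ need not be Lipschitz up to $\partial V$, so a direct global extension is unavailable. My plan is to exhaust $V$ by nested open sets $V_k\ssub V_{k+1}\ssub V$ with $\bigcup_k V_k=V$; on each $V_k$ the function $v$ is globally Lipschitz, so the preceding extension argument yields the triple identity for $v$ on each $V_k$. Letting $k\to\infty$ and using that both $\esup$ and $\sup$ over a nested increasing union equal the supremum of their values on the individual pieces (for $\esup$ this is continuity of Lebesgue measure applied to the superlevel sets), I would pass the identity up to all of $V$.

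With the triple identity now available for $u$ and for every admissible competitor $v$ on every test set $V$, the absolute minimizer inequality and conditions (i) and (ii) coincide term-for-term, finishing the equivalence. The only step I anticipate requiring nontrivial care is the transfer of Proposition \ref{prop:essential differential coincide with distance} to competitors with merely local Lipschitz regularity on $V$; the exhaustion plus McShane--Whitney extension described above handles it, and everything else is bookkeeping.
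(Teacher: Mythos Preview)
Your proposal is correct and follows the same route as the paper: invoke Proposition~\ref{prop:essential differential coincide with distance} and read off the equivalence. The paper's own proof is a single sentence (``In view of Proposition~\ref{prop:essential differential coincide with distance}, Lemma~\ref{l4.5} is no more than a restatement of the definition of absolute minimizers''), so your extension/exhaustion argument to match the hypothesis $w\in\Lip_{\loc}(\Omega)$ of Proposition~\ref{prop:essential differential coincide with distance} when the competitor is only in $\Lip_{\loc}(V)\cap C(\overline V)$ is additional care that the paper does not spell out.
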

\begin{proof}
In view of Proposition \ref{prop:essential differential coincide with distance}, Lemma \ref{l4.5}
 is no more than a restatement of the definition of absolute minimizers.\end{proof}

Notice that our concept of absolutely minimizing Lipschitz extensions
defined in Section 1
corresponds to the strongly absolutely minimizing Lipschitz extension in \cite{jn}.
%Recall that \eqref{e4.1} implies that $(\rn,\,\delta_F,\,dx)$ is a doubling metric measure space supporting a $(1,1)$-Poincar\'e inequality.
Applying Lemma~\ref{l4.5} and \cite[Theorem 3.1]{jn}, we have
the following existence result.

\begin{lemma}\label{l4.8}
For every $f\in \lip(\partial U)$, there exists an absolutely minimizing Lipschitz extension of $f$ on $U$.
\end{lemma}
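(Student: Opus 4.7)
The plan is to reformulate the problem as one about absolutely minimizing Lipschitz extensions in the metric space $(\Omega,\delta_F)$, and then invoke the existence theorem of Juutinen--Nieminen~\cite{jn}. First, I would record that, because $F$ is admissible, the two-sided bound ${\lambda(x)}^{-1}|v|\le F(x,v)\le \lambda(x)|v|$ transfers (via its effect on the supremum in the definition of $\delta_F$) to a local bi-Lipschitz equivalence between $\delta_F$ and the Euclidean distance on any compact set $K\subset\Omega$. In particular, $\delta_F$ is a genuine distance on $\Omega$, a locally Lipschitz function with respect to $\delta_F$ is the same as a locally Lipschitz function with respect to $|\cdot|$, and the boundary datum $f\in\lip(\partial U)$ is also Lipschitz on $\partial U$ with respect to $\delta_F$.

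Next, I would combine Lemma~\ref{l4.5} with Proposition~\ref{prop:essential differential coincide with distance} to re-express the notion of absolute minimizer intrinsically in terms of the metric $\delta_F$: a function $u\in\lip_{\loc}(U)\cap C(\overline U)$ is an absolute minimizer for $F$ on $U$ if and only if for every $V\ssub U$ and every $v\in \lip_{\loc}(V)\cap C(\overline V)$ agreeing with $u$ on $\partial V$, one has
\[
\sup_{x\in V}\Lip_{\delta_F}u(x)\le \sup_{x\in V}\Lip_{\delta_F}v(x).
\]
This is precisely the definition of a \emph{strongly absolutely minimizing Lipschitz extension} in the sense of~\cite{jn}, as mentioned in the discussion right after Lemma~\ref{l4.5}.

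With this reformulation in place, the existence of such an extension for arbitrary Lipschitz boundary data $f\in\lip(\partial U)$ follows from \cite[Theorem 3.1]{jn}, provided the abstract hypotheses of that theorem are met. Those hypotheses concern the ambient metric space $(\Omega,\delta_F)$ and the relatively compact subset $U\ssub\Omega$; since $\delta_F$ is locally bi-Lipschitz equivalent to the Euclidean distance, $(\Omega,\delta_F)$ is a locally compact length space, and $U$ has compact $\delta_F$-closure inside $\Omega$ with $f$ Lipschitz on $\partial U$ in both metrics. Thus \cite[Theorem 3.1]{jn} produces a function $u\in\lip_{\loc}(U)\cap C(\overline U)$ with $u|_{\partial U}=f$ that is a strongly absolutely minimizing Lipschitz extension of $f$ in $(U,\delta_F)$, and by the characterization above this $u$ is an absolutely minimizing Lipschitz extension of $f$ on $U$ with respect to $F$.

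The only genuinely non-routine point is the equivalence of our Euclidean/differential formulation with the metric/pointwise-Lipschitz formulation used in~\cite{jn}; but that equivalence is exactly what Lemma~\ref{l4.5} (via Proposition~\ref{prop:essential differential coincide with distance}) delivers. Once this bridge is crossed, verifying the hypotheses of~\cite[Theorem 3.1]{jn} is a direct check using the admissibility of~$F$, and the existence conclusion is immediate. Hence the main work for the lemma has already been done in Section~3 and in Lemma~\ref{l4.5}, and the proof amounts to assembling these ingredients and citing~\cite[Theorem 3.1]{jn}.
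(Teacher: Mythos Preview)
Your proposal is correct and follows essentially the same approach as the paper: reformulate absolute minimizers via Lemma~\ref{l4.5} (and Proposition~\ref{prop:essential differential coincide with distance}) as strongly absolutely minimizing Lipschitz extensions in the metric space $(\Omega,\delta_F)$, then invoke \cite[Theorem~3.1]{jn}. The paper's proof is in fact just a one-line citation of these two ingredients, so your write-up simply fills in the routine verification of the hypotheses that the paper leaves implicit.
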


\subsection{Proof of uniqueness}
We point out here that the existence  and   the uniqueness   of
absolutely minimizing Lipschitz extensions in domains in a length
space have already been proven in~\cite{pssw} via a probabilistic
approach called the Tug-of-War. Here to prove the uniqueness
result,  we derive the following comparison principle, by applying
the strategy developed by Armstrong and Smart~\cite{as}.

\begin{lemma}\label{l4.9}
Let $u,\,v\in \lip_{\loc}(U)\cap C(\overline U)$ be absolute minimizers on $U$. Then
$$\max_{x\in \overline U}[u(x)-v(x)]= \max_{x\in \partial U}[u(x)-v(x)].$$
\end{lemma}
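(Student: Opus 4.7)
The plan is to adapt the strategy of Armstrong and Smart \cite{as} to the Finsler setting, proceeding in two phases. First I would establish a comparison principle for absolute minimisers against ``cones" built from the intrinsic distance $\delta_F$; then I would use that cone comparison to prove Lemma~\ref{l4.9} directly, bypassing any PDE interpretation.

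In the first phase I would show that if $u$ is an absolute minimiser on $U$, then for every open set $V \subset\subset U$, every pole $y_0 \in \overline U \setminus V$, and every slope $a \ge 0$,
\[
\max_{x \in \overline V}\bigl(u(x) - a\,\delta_F(y_0, x)\bigr) = \max_{x \in \partial V}\bigl(u(x) - a\,\delta_F(y_0, x)\bigr),
\]
together with the analogous statement from below. This rests on a cut-and-paste argument: the cone $x \mapsto a\,\delta_F(y_0, x)$ has $\lip_{\delta_F}$-slope at most $a$ by the triangle inequality for $\delta_F$, so if the displayed equality failed, one could replace $u$ by its minimum with a suitably translated cone on the open subregion where it is violated. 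By Proposition~\ref{prop:essential differential coincide with distance} the resulting competitor has strictly smaller $\F$-value on that subregion while agreeing with $u$ on its boundary, contradicting Lemma~\ref{l4.5}.

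In the second phase, for $x \in U$ and $0 < r < \dist(x, \partial U)$ set
\[
S^{+}_u(x, r) := \sup_{y \in \overline{B_r(x)}} \frac{u(y) - u(x)}{\delta_F(x, y)}, \qquad S^{-}_u(x, r) := \sup_{y \in \overline{B_r(x)}} \frac{u(x) - u(y)}{\delta_F(x, y)},
\]
and similarly for $v$. Comparison with cones implies, as in \cite{ceg}, that $r \mapsto S^{\pm}_u(x, r)$ and $r \mapsto S^{\pm}_v(x, r)$ are nondecreasing. Suppose for contradiction that $M := \max_{\overline U}(u - v) > M_0 := \max_{\partial U}(u - v)$, and set $E := \{x \in \overline U : u(x) - v(x) = M\}$; by hypothesis $E$ is a nonempty compact subset of $U$. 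Fix $x_0 \in E$, choose $r > 0$ smaller than $\dist(E, \partial U)$, and let $y_r \in \overline{B_r(x_0)}$ realise $S^{+}_u(x_0, r)$. Applying cone comparison from above to $u$ and from below to $v$ with pole $y_r$ on $V = B_r(x_0)$, combined with the maximality $u(x_0) - v(x_0) = M$, I would conclude $u(y_r) - v(y_r) = M$, i.e.\ $y_r \in E$. Iterating produces a sequence in $E$ that must eventually exit every compact subset of $U$, contradicting $E \subset U$.

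The main obstacle will be the slope-matching step in phase two: establishing $y_r \in E$ requires the equality $S^{+}_u(x_0, r) = S^{-}_v(x_0, r)$, which is then propagated along the chain of maximisers. In the Euclidean infinity-harmonic case this equality rests on the smoothness and rotational symmetry of Euclidean cones and on a PDE-based slope identity; in our setting the cones are built from the merely continuous, possibly non-smooth distance $\delta_F$, whose infinitesimal behaviour is controlled only almost everywhere via Lemma~\ref{lemma:key lemma}. The equality therefore has to be extracted through a careful passage to the limit in cone comparisons, leveraging the monotonicity of $S^{\pm}$, the triangle inequality for $\delta_F$, and the local equivalence of $\delta_F$ with the Euclidean metric guaranteed by admissibility.
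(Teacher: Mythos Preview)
Your Phase~1 is correct and coincides with the paper's Lemma~\ref{l4.53}: absolute minimisers satisfy comparison with $\delta_F$-cones, and the cut-and-paste argument via Lemma~\ref{l4.5} and Proposition~\ref{prop:essential differential coincide with distance} is exactly how the paper establishes it.

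Phase~2, however, is \emph{not} the Armstrong--Smart argument, and the obstacle you isolate is a genuine gap rather than a technicality. What Armstrong and Smart do---and what the paper reproduces---is pass to the sup- and inf-convolutions
\[
u^{r}(x):=\sup_{\delta_F(z,x)\le r}u(z),\qquad v_{r}(x):=\inf_{\delta_F(z,x)\le r}v(z),
\]
and prove, from cone comparison alone, the discrete inequalities $S_r^{-}u^{r}\le S_r^{+}u^{r}$ and $S_r^{-}v_{r}\ge S_r^{+}v_{r}$ on $U_{2r}$, where $S_r^{+}w(x)=\bigl(w^{r}(x)-w(x)\bigr)/r$ and $S_r^{-}w(x)=\bigl(w(x)-w_{r}(x)\bigr)/r$. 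A short case analysis on the set where $u^{r}-v_{r}$ attains its maximum then forces that maximum to lie in $U_{r}\setminus U_{2r}$, and letting $r\to0$ finishes. No identity of the form $S^{+}_u=S^{-}_v$ at a single point is ever invoked; the whole point of the convolution is to trade that pointwise matching for a one-sided inequality satisfied \emph{everywhere} by the regularised functions.

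Your scheme, by contrast, hinges on precisely that cross-identity together with an iteration meant to push maximisers of $u-v$ out of $U$. Neither piece is secured. At an interior maximum $x_0$ of $u-v$ one gets only $S^{+}_u(x_0,r)\le S^{+}_v(x_0,r)$ and $S^{-}_u(x_0,r)\ge S^{-}_v(x_0,r)$, not $S^{+}_u(x_0,r)=S^{-}_v(x_0,r)$; and even granting $y_r\in E$ at each step, the chain $x_0,y_r,y_r',\dots$ has consecutive points at $\delta_F$-distance $r$ but can still oscillate or accumulate inside the compact set $E$ rather than exit. This steepest-ascent route is the one that pre--Armstrong--Smart attempts could not close without a PDE; their convolution device was introduced specifically to bypass it. You should replace Phase~2 by the $u^{r}$, $v_{r}$ argument.
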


Before going into the proof of Lemma \ref{l4.9}, let us first
recall the definition of the comparison with cones introduced by
Crandall et al.~\cite{ceg}. A function $u\in C(U)$ is said to
satisfy the {\it property of comparison with cones}
 if
for all subsets $V\subset\subset U$ and  all $a\ge0$, $b\in\rr$ and
$x_0\in\rn\setminus V$, we have

(\textrm{I}) $\max_{x\in \partial V}[u(x)-C_{b,a,x_0}(x)]\le0$ implies
$\max_{x\in V}[u(x)-C_{b,a,x_0}(x)]\le0;$

(\textrm{II}) $\max_{x\in \partial V}[u(x)-C_{b,-a,x_0}(x)]\ge0$ implies
$\max_{x\in V}[u(x)-C_{b,-a,x_0}(x)]\ge0,$

\noindent
where the cone function $C_{b,a,x_0}$ is defined as
$$C_{b,a,x_0}(x):=b+a\,\delta_F(x,\,x_0).$$
It is known that an absolute minimizer satisfies the comparison property
with cones;
see \cite{ceg} for Euclidean case and \cite{acj,jn,gwy,cp,dmv} for the
setting of metric spaces that are length spaces.

The following is a list of  equivalent characterizations for absolute minimizers.

\begin{lemma}\label{l4.53}
The following statements are mutually equivalent:

\noindent{\rm(i)}  $u$ is an absolute  minimizer on $U$.

\noindent{\rm(ii)}  for all open sets $V\subset\subset U$,
$\lip_{\delta_F}(u,\,V)=\lip_{\delta_F}(u,\,\partial V)$.

\noindent{\rm(iii)}  $u$ satisfies the property of comparison with cones.
\end{lemma}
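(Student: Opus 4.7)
My overall plan is to establish the cycle (i)$\Leftrightarrow$(ii)$\Leftrightarrow$(iii), using Proposition~\ref{prop:essential differential coincide with distance} and Lemma~\ref{l4.5} as the main analytic tools. A key preparatory observation is that, because $F$ is admissible, $\delta_F$ is locally biLipschitz to the Euclidean distance (by Proposition~\ref{prop:basic prop of dual}) and coincides infinitesimally with the intrinsic path metric $d_c^*$ by Lemma~\ref{lemma:key lemma}; hence $(\Omega,\delta_F)$ is a length space, and for every locally Lipschitz function $w$ on any open $V\subset\subset\Omega$,
\[
\lip_{\delta_F}(w,V)=\sup_{x\in V}\Lip_{\delta_F}w(x).
\]
I will also repeatedly use that, by the triangle inequality, each cone $C_{b,a,x_0}$ is $|a|$-Lipschitz with respect to $\delta_F$ on any subset avoiding $x_0$.

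The implication (i)$\Rightarrow$(ii) will follow from a McShane--Whitney argument. Setting $L:=\lip_{\delta_F}(u,\partial V)$, the upper extension $w(x):=\inf_{y\in\partial V}\{u(y)+L\delta_F(x,y)\}$ agrees with $u$ on $\partial V$ and is globally $L$-Lipschitz on $V$ with respect to $\delta_F$. Applying Lemma~\ref{l4.5}(ii) to $u$ and $w$ yields $\sup_V\Lip_{\delta_F}u\le\sup_V\Lip_{\delta_F}w\le L$, and the length-space identity above upgrades this to $\lip_{\delta_F}(u,V)\le L$; the reverse inequality is immediate from continuity of $u$ on $\overline V$. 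Conversely, (ii)$\Rightarrow$(i) reads off directly from Lemma~\ref{l4.5}: for any competitor $v$ matching $u$ on $\partial V$,
\[
\sup_V\Lip_{\delta_F}u=\lip_{\delta_F}(u,V)=\lip_{\delta_F}(u,\partial V)=\lip_{\delta_F}(v,\partial V)\le\lip_{\delta_F}(v,V)=\sup_V\Lip_{\delta_F}v.
\]

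For (ii)$\Rightarrow$(iii) the plan is to argue by contradiction. Suppose $u\le C_{b,a,x_0}$ on $\partial V$ while $M:=\max_{\overline V}(u-C_{b,a,x_0})>0$. The set $V_0:=\{x\in V:(u-C_{b,a,x_0})(x)>M/2\}$ is open and nonempty with $\overline{V_0}\subset V$, and by continuity $u=C_{b,a,x_0}+M/2$ on $\partial V_0$. Using (ii) and the cone bound,
\[
\lip_{\delta_F}(u,V_0)=\lip_{\delta_F}(u,\partial V_0)\le\lip_{\delta_F}(C_{b,a,x_0},\partial V_0)\le a.
\]
Picking a maximizer $x^*\in V_0$ of $u-C_{b,a,x_0}$ and a near-$\delta_F$-geodesic from $x^*$ to $x_0\notin V$, which crosses $\partial V_0$ at some $y^*$ with $\delta_F(x^*,y^*)+\delta_F(y^*,x_0)\le\delta_F(x^*,x_0)+\varepsilon$, one derives
\[
M=u(x^*)-C_{b,a,x_0}(x^*)\le u(y^*)+a\,\delta_F(x^*,y^*)-b-a\,\delta_F(x^*,x_0)\le M/2+a\varepsilon,
\]
which produces the required contradiction as $\varepsilon\to 0$; the symmetric argument handles lower cones. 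I expect the principal technical obstacle to be the existence of such a near-$\delta_F$-geodesic, which is precisely where the length-space character of $\delta_F$ is indispensable.

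Finally, for (iii)$\Rightarrow$(ii) I would fix $V\subset\subset U$, set $L:=\lip_{\delta_F}(u,\partial V)$, and fix an arbitrary $x_1\in V$. For each $y_0\in\partial V\subset\mathbb R^n\setminus V$, the lower cone $C_{u(y_0),-L,y_0}$ satisfies $u\ge C_{u(y_0),-L,y_0}$ on $\partial V$, so (iii) promotes this to $V$; evaluation at $x_1$ yields $u(y_0)\le u(x_1)+L\delta_F(x_1,y_0)$ for every $y_0\in\partial V$. Hence $u\le C_{u(x_1),L,x_1}$ on $\partial V\cup\{x_1\}=\partial(V\setminus\{x_1\})$, and since $x_1\notin V\setminus\{x_1\}$, one more application of (iii) upgrades this to $u(x)\le u(x_1)+L\delta_F(x,x_1)$ for all $x\in V$. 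Interchanging the roles of $x_1$ and $x$ gives the reverse inequality, so $\lip_{\delta_F}(u,V)\le L$, closing the cycle.
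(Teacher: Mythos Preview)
The identity $\lip_{\delta_F}(w,V)=\sup_{x\in V}\Lip_{\delta_F}w(x)$ that you announce at the outset is false for general open $V\subset\subset\Omega$: it requires $V$ to be $\delta_F$-geodesically convex. In the Euclidean case, take $V$ to be a thin U-shaped region and $w$ a function varying slowly along the U; the two ends of the U are close in $\delta_F$ but far in the intrinsic path metric of $V$, so $\lip_{\delta_F}(w,V)$ strictly exceeds $\sup_V\Lip_{\delta_F}w$. The paper is aware of this obstruction: in its (i)$\Rightarrow$(ii), after obtaining $\sup_V\Lip_{\delta_F}u\le L$ from the McShane extension and Lemma~\ref{l4.5}, it does \emph{not} appeal to any such identity but joins $x,y\in V$ by a $\delta_F$-geodesic $\gamma$ in the ambient geodesic space $(U,\delta_F)$ and splits according to whether $\gamma\subset V$; when $\gamma$ leaves $V$ it controls the exterior portion using the known Lipschitz constant $L$ of $u$ on $\partial V$. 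Your (i)$\Rightarrow$(ii) can be repaired in exactly this way.

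The serious damage is to your (ii)$\Rightarrow$(i). In your chain the terminal step $\lip_{\delta_F}(v,V)=\sup_V\Lip_{\delta_F}v$ is precisely the non-trivial direction of the false identity, now applied to an arbitrary competitor $v\in\lip_{\loc}(V)\cap C(\overline V)$ about which you know nothing outside $V$; the boundary-splitting trick that rescues the $u$ case is unavailable because $v$ does not satisfy (ii). Since you do not supply an independent (iii)$\Rightarrow$(i), your cycle is left open. The paper closes the loop differently, running (i)$\Rightarrow$(ii)$\Rightarrow$(iii)$\Rightarrow$(i) and handling the last implication by invoking the argument of \cite[Proposition~5.8]{jn}, with Proposition~\ref{prop:essential differential coincide with distance} removing the need for their weak Fubini hypothesis.

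Your (ii)$\Rightarrow$(iii) is essentially the paper's argument (the level $M/2$ versus $0$, near-geodesics versus exact geodesics, are immaterial). Your (iii)$\Rightarrow$(ii), by contrast, is a clean self-contained alternative to the paper's route: the two-stage cone comparison---first bounding $u(y_0)\le u(x_1)+L\,\delta_F(x_1,y_0)$ for every $y_0\in\partial V$, then comparing on $V\setminus\{x_1\}$ with vertex $x_1$---yields the global $L$-Lipschitz bound on $V$ directly, with no external citation. If you keep this and replace your flawed (ii)$\Rightarrow$(i) by the paper's (iii)$\Rightarrow$(i), the proof is complete.
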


\begin{proof}
(i)$\Rightarrow$(ii). It is a consequence of Lemma~\ref{l4.5}.
Indeed, notice that for every pair $x,\,y\in\partial V$ with $x\ne
y$, by the continuity of  $\delta_F$ we can find $x_n,\,y_n\in V$
such that $x_n\to x$ and $y_n\to y$.  By the continuity of $u$, we
have
  $$\frac{|u(x_n)-u(y_n)|}{\delta_F(x_n,\,y_n)}\to\frac{|u(x)-u(y)|}{\delta_F(x,\,y)}\quad\text{as}\ n\to\infty.$$
Hence $\lip_{\delta_F}(u,\,V)\ge \lip_{\delta_F}(u,\,\partial V)$
holds. Thus, it suffices to prove that $\lip_{\delta_F}(u,\,V)\le
\lip_{\delta_F}(u,\,\partial V)$.

For $x\in\rn$, let $$w(x):=\sup_{z\in\partial V}[u(z)+\lip_{\delta_F}(u,\,\partial V) \delta_F(x,\,z)].$$
 Then $\lip_{\delta_F}(w,\,\rn)=\lip_{\delta_F}(u,\,\partial V)$
and  $w=u$ on $\partial V$.
Applying Lemma \ref{l4.5}, we have $$\sup_{x\in V}\lip_{\delta_F} u(x)\le \sup_{x\in V}\lip_{\delta_F} w(x)\le
 \lip_{\delta_F}(u,\,\partial V).$$
Notice  that $(U,\,\delta_F)$ is a geodesic space. Indeed, since
$U\ssub\Omega$ is open bounded, our assumption on $F$ implies that
there exist positive constants $\alpha:=\alpha(U)$ and $\beta:=\beta(U)$ such
that
\begin{align*}
\alpha |v|\leq F(x,v)\leq \beta |v|
\end{align*}
for all $x\in U$ and $v\in \bR^n$. Combining this fact together with~\cite[Theorem 3.9]{gpp06} yields that $(U,\,\delta_F)$ is a geodesic space. Thus, given a pair of points  $x,\,y\in U$, we can select  a $\delta_F$-geodesic curve $\gz$ joining $x$ and $y$.

If $\gz\subset V$, then
\begin{eqnarray*}|u(x)-u(y)|&\le& \int_\gz\lip_{\delta_F}\, u(z)\,{\rm \text{d}}s\\
&\le &\delta_F(x,\,y)\,\sup_{x\in V}\lip_{\delta_F} u(x)\\
    &\le & \delta_F(x,y)\, \lip_{\delta_F}(u,\,\partial V). \end{eqnarray*}
Here ${\rm \text{d}}s$ denotes arc-length integral on $\gz$ with respect to the metric
$\delta_F$.
If $\gamma\not\subset V$, denote by $\hat x$, $\hat y\in \gz\cap\partial V$
points that have shortest distance to $x$ and $y$,
respectively.
Then
\begin{eqnarray*}
|u(x)-u(y)|&\le &|u(x)-u(\hat x)|+|u(\hat x)-u(\hat y)|+|u(\hat y)-u(y)|\\
 &\le &[\delta_F( x,\,\hat x)+\delta_F( y,\,\hat y)]\sup_{x\in V}\lip_{\delta_F} u(x)  +\delta_F( \hat x,\,\hat y)\lip_{\delta_F}(u,\,\partial V)\\
 &\le& \delta_F(  x,\, y)\lip_{\delta_F}(u,\,\partial V).
\end{eqnarray*}
Thus, in both cases, we have the estimate
\[
   \frac{|u(x)-u(y)|}{\delta_F(x,y)}\le \lip_{\delta_F}(u,\,\partial V),
\]
which implies  that $\lip_{\delta_F}(u,\,V)\le \lip_{\delta_F}(u,\,\partial V)$.

(ii)$\Rightarrow$(iii). We prove~(\textrm{I}) by a contradiction
argument. The proof of~(\textrm{II}) is similar (and left to the
interested reader). Let $u$ be an absolute minimizer and assume
that
$$\max_{x\in \partial V}[u(x)-C_{b,a,x_0}(x)]\le0.$$ Suppose that
(\textrm{I}) fails, that is,  $\max_{x\in
V}[u(x)-C_{b,a,x_0}(x)]>0$. Denote by $W$ the open set of all
$x\in V$ such that $u(x)>C_{b,a,x_0}(x)$. By assumption, $W$ is
not empty. Moreover, we have $u= C_{b,a,x_0}$ on $\partial
W$. Since $W\subset V\subset\subset U$, by assumption (ii) we have
$$\lip_{\delta_F}(u,\overline W)=\lip_{\delta_F}(u,W)=\lip_{\delta_F}(u,\partial W)=a.$$
For $x\in W$, let
$\gz$ be a $\delta_F$-geodesic curve joining $x$ and $x_0$, and
take $z\in\partial W\cap \gz$ be a closest point to $x$.
Then \begin{eqnarray*}
u(x)-u(z)&>&C_{b,a,x_0}(x)-C_{b,a,x_0}(z)\\
&=& a\delta_F(x_0,\,x)-a{\delta_F}(x_0,\,z)\\
&=&a\delta_F(x,\,z),
\end{eqnarray*}
which implies that $\lip_{\delta_F}(u,\,W)>a$. We reach a contradiction. So $W$ must be empty.
Therefore (ii) implies (iii).

 (iii)$\Rightarrow$(i).
We only need to notice that, with the help of Proposition \ref{prop:essential differential coincide with distance},
 the argument provided by the proof of \cite[Proposition 5.8]{jn} still works
 here, without the additional weak Fubini property required in \cite{jn};
see also \cite{acj}.    This completes the proof of Lemma \ref{l4.53}.
\end{proof}

 With the aid of Lemmas~\ref{l4.5} and \ref{l4.53},
Lemma~\ref{l4.9} will be proved by following the procedure from~\cite{as}.
Since the proof in \cite{as} is for the  case $F(x, \cdot):=|\cdot|$,
we write down the details below for the reader's convenience. We need some notation.
For all $r>0$, let $$ U_r:=\{z\in U:\ \overline{B_{\delta_F}(z,\,r)}\subset U\}.$$ For $x\in U_r$,
we  let
$$S_r^+u(x):= \frac{u^r(x)-u(x)}{r}\quad\text{and}\quad S_r^-u(x):=\frac{u(x)-u_r(x)}{r},$$
where $u^r(x):= \sup_{\delta_F(z,\,x)\le r} u(z)$ and $u_r(x):= \inf_{\delta_F(z,\,x)\le r} u(z)$.
\begin{proof}[Proof of Lemma \ref{l4.9}.]
First we claim that for $x\in U_{2r}$, we have
\begin{equation}\label{e4.x5}
S_r^-u^r(x)-S_r^+u^r(x)\le0\le S_r^-v_r(x)-S_r^+v_r(x).
\end{equation}
Indeed, let $y\in \overline {B_{\delta_F}(x,\,r)}$ and $z\in \overline {B_{\delta_F}(x,\,2r)}$
such that $u^r(x)=u(y)$ and $(u^{r})^r(x)=u^{2r}(x)=u(z)$.  Observe
 that $(u^r)_r(x)\ge u(x)$. Then we have
  \begin{equation}
    \begin{aligned}
  S_r^-u^r(x)-S_r^+u^r(x)&=\frac1{r}\big[2u^r(x)-(u^r)^r(x)-(u^r)_r(x)\big]\\
&\le
 \frac 1{r}\big[2u (y)-u(z)-u(x)\big].\end{aligned} \label{eq: 4.star-2}
  \end{equation}
Note that for $w\in\boz$ such that $\delta_F(x,\,w)=2r$, we have
$$u(w)\le u (z)=u(x)+ [u (z)-u(x)]= u(x)+\frac{[ u (z) -u(x)]}{2r}\delta_F(w,\,x).$$
Thus, the comparison with cones property of $u$ implies that the inequality
\[
   u(w)\le u(x)+\frac{[ u (z) -u(x)]}{2r}\delta_F(w,\,x)
\]
holds for all
$w\in\boz$ with $\delta_F(x,\,w)\le 2r$. In particular,  taking $w=y$ and noticing  that $\delta_F(y,\,x)\le r$, we obtain
\begin{align*}u(y)&\le  u(x)+\frac{[ u (z) -u(x)]}{2r}\delta_F(y,\,x)\\
&\le   u(x)+\frac12{[ u (z) -u(x)]}\\
&=\frac12{[ u (z)+u(x)]},
\end{align*}
which, together with (\ref{eq: 4.star-2}), implies the first inequality of \eqref{e4.x5}. The second inequality of \eqref{e4.x5} follows similarly.

Next we claim that~\eqref{e4.x5}  gives us that
\begin{equation}\label{e4.xx5}
    \sup_{x\in U_r}[u^r(x)-v_r(x)]= \sup_{x\in U_r\setminus U_{2r}}[u^r(x)-v_r(x)]
\end{equation}
for all $r>0$. Once we prove that~\eqref{e4.xx5} holds, then  Lemma~\ref{l4.9} follows by  letting $r\to0$ in~\eqref{e4.xx5}. Thus,  we only need to prove~\eqref{e4.xx5}.

Suppose, on the contrary,  that \eqref{e4.xx5} dose not hold.
Then there exists  some $r>0$ for which
\begin{equation}\label{eq: assume e4.xx5  not true}
\sup_{x\in U_r}[u^r(x)-v_r(x)] > \sup_{x\in U_r\setminus U_{2r}}[u^r(x)-v_r(x)].
\end{equation}
By the continuity of $u^r -v_r $, there must exist some $y\in \overline {U}_{r}$ such that
$$u^r(y)-v_r(y)=\sup_{x\in U_{ r}}[u^r(x)-v_r(x)].$$ Note that (\ref{eq: assume e4.xx5  not true}) implies that $y\in U_{2r}$.
Denote by $E$ the set of all such $y$ and let $$K:=\left\{x\in E:
\ u^r(x)=\max_{z\in E} u^r(z)\right\}.$$ Then $K$ is a closed subset of  $U_{2r}$  by the continuity of $u^r$ again.
Choose $x_0\in\partial K$. Since $x_0\in E$, for every $x\in U_r$ we have
\[
   u^r(x_0)-v_r(x_0)\ge u^r(x )-v_r(x ).
\]
Since $x_0\in U_{2r}$, we have  $ B_{\delta_F}(x_0,r)\subset U_r$. Thus, for every $x\in B_{\delta_F}(x_0,r)$, we deduce from above inequality  that
\[
   u^r(x_0)-v_r(x_0)\ge \inf_{z\in B_{\delta_F}(x_0,r)} u^r(z)-v_r(x)=(u^r)_r(x_0)-v_r(x).
\]
That is,
\[u^r(x_0)-(u^r)_r(x_0) \geq  v_r(x_0) -v_r(x).\]
Divide by $r$ on each side of above equality and then take the infimum over $x\in B_{\delta_F}(x_0,r)$. We obtain
\begin{equation}\label{eq: 4.star-1}
  S_r^-u^r(x_0) \ge S_r^-v_r(x_0).
\end{equation}
Now we  have two cases.

{\it Case 1:} $S^+_ru^r(x_0)=0$.

{\it Case 2:} $S^+_ru^r(x_0)>0$.

Consider {\it Case 1}. In this case,
 \eqref{e4.x5} yields that $S_r^-u^r(x_0)\le0$.
Hence  we derive that  $S_r^-u^r(x_0)=0$ holds,
which, together with (\ref{eq: 4.star-1}), implies that $S_r^-v_r(x_0)=0$. By \eqref{e4.x5} again, we have
$S_r^+v_r(x_0)\le0$ and hence $S_r^+v_r(x_0)=0$. So we obtain
$u^r\equiv u^r(x_0)$ and $v_r\equiv v_r(x_0)$ hold on
$  B_{\delta_F}(x_0,\,r)$. This contradicts to the fact that $x_0\in\partial K$.

It remains to consider {\it Case 2}.
Choose $z\in\overline B_{\delta_F}(x_0,\,r)$ such that
$$0<rS^+_ru^r(x_0)=u^r(z)-u^r(x_0).$$ Since $u^r(z)>u^r(x_0)$ and $x_0\in K$, it follows that
$z\notin E$. Note that $z\in U_r $ since $x_0\in U_{2r}$.
Therefore the fact that  $x_0\in E$ yields   $$u^r(x_0)-v_r(x_0)> u^r(z)-v_r(z).$$
 That is, we have   $$v_r(z)-v_r(x_0)>u^r(z)-u^r(x_0).$$
  Hence we derive   that
$$rS^+_rv_r(x_0)\ge v_r(z)-v_r(x_0)>u^r(z)-u^r(x_0)=rS^+_ru^r(x_0),$$
which, together with (\ref{eq: 4.star-1}), implies that
$$S^+_rv_r(x_0)-S_r^-v_r(x_0)> S^+_ru^r(x_0)-S_r^-u^r(x_0).$$
We obtain a contradiction to~\eqref{e4.x5}.

Since both cases above do not hold, we conclude that \eqref{eq: assume e4.xx5  not true} is not true for any $r>0$. That is,   \eqref{e4.xx5} holds. The proof of Lemma \ref{l4.9} is complete.
\end{proof}

Now we are in a position to prove Theorem \ref{thm:  Existence and Uniqueness}.
\begin{proof}[Proof of Theorem \ref{thm:  Existence and Uniqueness}]
Theorem \ref{thm:  Existence and Uniqueness}(i) follows from  Lemmas \ref{l4.8} and \ref{l4.9}.
Theorem \ref{thm:  Existence and Uniqueness}(ii) follows from Lemma \ref{l4.5} 
with the observation that under the assumption \eqref{e4.xx1},
$$\lip_{\delta_F}u=\lip_{\delta_{ \wz F}}u$$ 
almost everywhere for every $u\in \lip_\loc(\rn)$. The proof of Theorem \ref{thm:  Existence and Uniqueness} is complete.
\end{proof}

\section*{Appendix: an alternative proof of Lemma~\ref{lemma:key lemma}}

In the appendix, we give an alternative proof of Lemma~\ref{lemma:key lemma} based on an approximation argument similar to the proof of~\cite[Proposition 3.1]{g14}. The proof is based on a personal communication with Professor Andrea Davini. In particular, he draws our attention on the useful  reference~\cite{dp07} and carefully explains its relation with~\cite{d05}.  We would like to express our gratitude here for his kind help.

\begin{proof}[Proof of Lemma~\ref{lemma:key lemma}]
    The proof is similar to that of  \cite[Proposition 3.1 ]{g14}. The inequality $\delta_F(x,y)\leq d_c^*(x,y)$ follows directly from definition. Indeed, for each Lipschitz function $u$ with $\|F(x,\nabla u(x))\|_\infty\leq 1$, each $x,y\in \Omega$, for each Lipschitz curve $\gamma$ joining $x$ and $y$ that is transversal to the zero measure set $N:=\{x\in \Omega:F(x,\nabla u(x))> 1\}$,
    \begin{align*}
        u(x)-u(y)&=\int_\gamma \langle\nabla u(\gamma(t)),\gamma'(t)\rangle dt\\
        &\leq \int_\gamma F^*(\gamma(t),\gamma'(t))dt=\mathcal{L}_{d_c^*}(\gamma),
    \end{align*}
    where $\mathcal{L}_{d_c^*}$ denotes the length of the curve $\gamma$ with respect to the metric $d_c^*$. Taking infimum over all admissible curves on the right-hand side and then supermum over all admissible functions over the left-hand side, we obtain
    \begin{align*}
        \delta_F(x,y)\leq d_c^*(x,y).
    \end{align*}
    In particular,
    \begin{align*}
        \limsup_{y\to x}\frac{\delta_F(x,y)}{d_c^*(x,y)}\leq 1.
    \end{align*}

    So we are left to prove that
    \begin{align}\label{eq:liminf ineq}
        \liminf_{y\to x}\frac{\delta_F(x,y)}{d_c^*(x,y)}\geq 1.
    \end{align}
    When $F$ is continuous,  ~\eqref{eq:liminf ineq} holds by~\cite[Proposition 3.1]{g14}. In the general case when $F$ is only Borel measurable, we can use an approximation argument as follows. Since~\eqref{eq:liminf ineq} is at infinitesimal scale, we can assume that $F$ is uniform elliptic with absolute positive constants $\alpha$ and $\beta$. That is, $\alpha |v|\leq F(x,v)\leq \beta |v|$
  for all $x\in \Omega$ and $v\in \bR^n$. Then, by~\cite[Theorem 4.1]{d05} or~\cite[Section 2]{dp07}, there exists a sequence $\{F_n\}_{n\in \mathbb{N}}$ of continuous Finsler structures such that
    \begin{align*}
        d_c^{*n}\to d_c^*\quad \text{ and }\quad \limsup_{n\to \infty}\delta_{F_n}\leq \delta_F
    \end{align*}
    with respect to the uniform convergence of distances on $\Omega\times \Omega$,  where $\delta_{F_n}$ is the  distance induced by $F_n$ in the same way as that of $\delta_{F}$,  and $d_c^{*n}:=d_c^{F^*_n}$ is the distance induced by the dual of the Finsler structure $F_n$ for all $n$.

    Now, for any  $\varepsilon>0$, there exists a number $N_0>1$ such that for  $n\geq N_0$, we have
    \begin{align*}
        \frac{\delta_F(x,y)}{d_c^*(x,y)}\geq (1-\varepsilon)\frac{\delta_{F_n}(x,y)}{d_c^{*n}(x,y)}.
    \end{align*}
    On the other hand, since $F_n$ is continuous, by~\cite[Proposition 3.1]{g14}, we have
    \begin{align*}
        \liminf_{y\to x}\frac{\delta_{F_n}(x,y)}{d_c^{*n}(x,y)}\geq 1.
    \end{align*}
    Thus, we deduce that
    \begin{align*}
        \liminf_{y\to x}\frac{\delta_{F}(x,y)}{d_c^{*}(x,y)}\geq \liminf_{y\to x}(1-\varepsilon)\frac{\delta_{F_n}(x,y)}{d_c^{*n}(x,y)}\geq 1-\varepsilon.
    \end{align*}
    Sending $\varepsilon\to 0$ yields (\ref{eq:liminf ineq}). The proof of Lemma \ref{lemma:key lemma} is complete.
\end{proof}

\textbf{Acknowledgements}

Part of this research was done when C.Y.Guo was visiting at School of Mathematical Sciences, Beijing Normal University and Department of Mathematics, Beihang University in China during the period 11 June --15 July 2015. He wishes to thank both of them for their great hospitality.


\begin{thebibliography}{99}
%
% and use \bibitem to create references. Consult the Instructions
% for authors for reference list style.
%
%% Format for Journal Reference
%\bibitem{Ref1}
%Author, I.: Article title. Journal Title-Abbreviated {\bf Vol}, pp--pp (year)
%% Format for books
%\bibitem{Ref2}
%Author, I., Smith, J.: Book Title. Publisher, Place (year)
%% Format for proceedings
%\bibitem{Ref3}
%Author, I., Smith, J.: Paper title. In: Editor, A. (ed.) Proceedings
%Title, Location, Date, pages. Publisher, Place (year)
%% etc


\bibitem{as}
 Armstrong, S.N., Smart, C.K.:  An easy proof of Jensen's theorem on the
uniqueness of infinity harmonic functions. {\it Calc. Var. Partial Differential
Equations} {\bf 37}, 381-384  (2010)


\bibitem{a1}
Aronsson, G.:
Minimization problems for the functional $\sup_x F(x, f(x), f' (x))$.
{\it Ark. Mat.} {\bf 6}, 33-53   (1965)


\bibitem{a2}
Aronsson, G.:  Minimization problems for the functional $\sup_x F(x, f(x), f' (x))$. II.
{\it Ark. Mat.} {\bf 6},  409-431 (1966)


\bibitem{a3}
Aronsson, G.:   Extension of functions satisfying Lipschitz conditions.
{\it Ark. Mat.} {\bf 6},  551-561 (1967)


\bibitem{a4}
Aronsson, G.:   Minimization problems for the functional $\sup_x F(x, f(x), f' (x))$. III.
{\it Ark. Mat.} {\bf 7},
 509-512 (1969)





 \bibitem{acj}
Aronsson, G.,  Crandall, M.G., Juutinen, P.:   A tour of the theory of
absolutely minimizing functions. {\it Bull.
Amer. Math. Soc. (N. S.)} {\bf  41}  439-505 (2004)

\bibitem{bb01}
 Barles, G., Busca, J.: Existence and comparison results for fully nonlinear degenerate elliptic equations without zeroth-order term. Comm. Partial Differential Equations \textbf{26}, no. 11-12, 2323-2337 (2001)

\bibitem{cp}
Champion, T., De Pascale, L.: Principle of comparison with distance functions
for absolute minimzer. {\it
J. Convex Anal.} {\bf 14},   515-541 (2007)


\bibitem{cpp}
Champion, T., De Pascale,  L.,  Prinari, F.: $\Gamma$-Convergence and absolute
minimizers for supremal
functionals. {\it ESAIM Control Optim. Calc. Var.} {\bf 10},   14-27 (2004)


\bibitem{ceg}
 Crandall, M.G.,  Evans,  L.C.,  Gariepy, R.F.:  Optimal Lipschitz extensions
and the infinity Laplacian. {\it Calc.
Var. Partial Differential Equations}  {\bf 13}  123-139  (2001)

\bibitem{cgw07}
 Crandall, M.G., Gunnarsson, G., Wang, P.: Uniqueness of $\infty$-harmonic functions and the eikonal equation. Comm. Partial Differential Equations \textbf{32}, no. 10-12, 1587-1615 (2007)

\bibitem{d05}
Davini, A.: Smooth approximation of weak Finsler metrics, \textit{Differential Integral Equations} \textbf{18}, no. 5, 509-530 (2005)

\bibitem{dp07}
Davini, A., Ponsiglione, M.: Homogenization of two-phase metrics and applications, \textit{J. Anal. Math.} \textbf{103}, 157-196 (2007)

\bibitem{dp93}
De Cecco, G., Palmieri, G.: Intrinsic distance on a LIP Finslerian manifold, (Italian) \textit{Rend. Accad. Naz. Sci. XL Mem. Mat.} (5) \textbf{17}, 129-151 (1993)

\bibitem{dp95}
De Cecco, G., Palmieri, G.:  LIP manifolds: from metric to Finslerian structure, \textit{Math. Z.} \textbf{218}, no. 2, 223-237 (1995)

\bibitem{dmv}
Dragoni, F., Manfredi, J.J., Vittone,  D.:
Weak Fubini property and infinity
harmonic functions in Riemannian and
sub-Riemannian manifolds. {\it Trans. Amer. Math. Soc.}
{\bf 365},    837-859 (2013)


\bibitem{es}
Evans, L.C., Savin, O.: $C^{1,\az}$ regularity for infinity harmonic
functions in two dimensions.
{\it Calc. Var. Partial Differential Equations} {\bf 32}, 325-347  (2008)


\bibitem{es12}
Evans, L.C.,  Smart, C.K.:
Everywhere differentiability of infinity harmonic functions.
 {\it Calc. Var. Partial Differential Equations}  {\bf 42},  289-299 (2011)




 \bibitem{gpp} Garroni, A.,  Ponsiglione, M.,
Prinari, F.:   From 1-homogeneous supremum functional to difference
quotients: relaxation and $\Gamma$-convergence.
 {\it Calc. Var. Partial Differential Equations}
 {\bf 27}, 397-420  (2006)



 \bibitem{gwy}
 Gariepy,  R.,   Wang, C., Yu, Y.:
 Generalized cone comparison principle for viscosity solutions of the
 Aronsson equation and absolute minimizers.
 {\it Comm. Partial Differential Equations} {\bf 31},  1027-1046  (2006)


\bibitem{gpp06}
Garroni, A., Ponsiglione, M., Prinari, F.: From 1-homogeneous supremal functionals to difference quotients: relaxation and $\Gamma$-convergence, \textit{Calc. Var. Partial Differential Equations} \textbf{27}, no. 4, 397-420 (2006)

\bibitem{gnp01}
 Garroni, A., Nesi, V., Ponsiglione, M: Dielectric breakdown: optimal bounds, R. Soc. Lond. Proc. Ser. A Math. Phys. Eng. Sci. \textbf{457}, no. 2014, 2317-2335 (2001)
 
\bibitem{g14}
Guo, C.Y.: Intrinsic geometry and analysis of Finsler structures, \textit{submitted} 2014.

\bibitem{hil05}
 Hassani, Riad, Ionescu, Ioan R., Lachand-Robert, T.: Shape optimization and supremal minimization approaches in landslides modeling, Appl. Math. Optim. \textbf{52}, no. 3, 349-364 (2005)


 \bibitem{j93}
Jensen, R.:   Uniqueness of Lipschitz extensions: minimizing the sup norm of
the gradient. {\it Arch. Ration.
Mech. Anal.} {\bf 123},   51-74 (1993)


 \bibitem{ju}
Juutinen, P.:  Absolutely minimizing Lipschitz extensions on a metric space.
{\it Ann. Acad. Sci. Fenn. Math.} {\bf 27},  57-67   (2002)


 \bibitem{jn}
Juutinen, P.,    Shanmugalingam, N.:  Equivalence of AMLE, strong AMLE, and
comparison with cones in metric measure space.
{\it Math. Nachr.} {\bf 279},  1083-1098  (2006)


 \bibitem{kz}
Koskela,  P.,   Zhou, Y.: Geometry and analysis of Dirichlet forms.
{\it Adv. Math.}
{\bf 231},  2755-2801 (2012)


 \bibitem{ksz}
Koskela,  P.,  Shanmugalingam, N.,  Zhou, Y.: $L^\infty$-variational problem on
metric measure spaces. {\it Math. Res. Lett.} {\bf 19}, 1263-1275   (2012)

\bibitem{ksz-2014}
Koskela,  P.,  Shanmugalingam, N.,  Zhou, Y.: Intrinsic geometry and analysis of diffusion processes and $L^{\infty}$-variational problems. {\it Arch. Ration. Mech. Anal.} {\bf 214}, 99-142   (2014)




\bibitem{pssw}
Peres, Y., Schramm, O., Sheffield, S.,    Wilson, D.B.:  Tug-of-war and the
infinity Laplacian.
{\it J. Amer. Math. Soc.} {\bf 22},   167-210 (2009)



\bibitem{s05}
Savin, O.: $C^1$ regularity for infinity harmonic functions in two dimensions.
{\it Arch. Ration. Mech. Anal.}  {\bf 176},  351-361  (2005)

\bibitem{swz14}
Siljander, J., Wang, C.Y., Zhou, Y.: Everywhere differentiability of viscosity solutions to a class of Aronsson's equations, \textit{preprint} 2014.


\bibitem{wy}
Wang, C.,    Yu, Y.:   $C^1$-regularity of the Aronsson equation in
${\mathbf R}^2$.
 {\it Ann. Inst. H. Poincar\'e Anal. Non Lin\'eaire} {\bf 25},  659-678   (2008)
\end{thebibliography}
\end{document}